\numberwithin{equation}{section}
\newcommand{\la}{\lambda}
\newcommand{\al}{\alpha}
\newcommand{\notequal}{\neq}
\newcommand{\be}{\begin{equation}}
\newcommand{\bea}{\begin{eqnarray}}
\newcommand{\beas}{\begin{eqnarray*}}
\newcommand{\ee}{\end{equation}}
\newcommand{\eea}{\end{eqnarray}}
\newcommand{\eeas}{\end{eqnarray*}}
\newcommand{\R}{\mathbb{R}}
\newcommand{\Sd}{\mathbb{S}^2}
\newcommand{\dist}{\rm dist}
\renewcommand{\L}{{\mathcal L}(S)}
\renewcommand{\H}{{\mathcal H}(S)}
\newcommand{\E}{\mathcal E}
\newcommand{\F}{\mathcal F}
\DeclareMathOperator{\rank}{rank}
\newtheorem{theorem}{Theorem}[section]
\newtheorem{Lemma}[theorem]{Lemma}
\newtheorem{proposition}[theorem]{Proposition}
\newtheorem{definition}[theorem]{Definition}
\newtheorem{remark}[theorem]{Remark}
\newlength{\temparglen}
\newcommand{\pagenlarge}[1]{
\setlength{\temparglen}{#1\topmargin}
\addtolength{\textheight}{2\temparglen}
\addtolength{\topmargin}{-\temparglen}
\setlength{\temparglen}{#1\oddsidemargin}
\addtolength{\textwidth }{2\temparglen}
\addtolength{\oddsidemargin }{-\temparglen}
\addtolength{\evensidemargin }{-\temparglen}
}
\title[Stability under lamination and polycrystalline effective conductivities]{Stability under lamination and polycrystalline effective conductivity}
\author[N. Albin]
{N. Albin}
\address[Nathan Albin]{Department of Mathematics, Kansas State University, 138 Cardwell Hall, 1228 N.~17th Street, Manhattan, KS 66506, USA}
\email{albin@k-state.edu}
\author[V. Nesi]
{V. Nesi}
\address[Vincenzo Nesi]{Dipartimento di Matematica, Sapienza, Universit\`a di Roma, P.le A. Moro, 00100, Rome, Italy}
\email{vincenzo.nesi@uniroma1.it}
\author[M. Palombaro]
{M. Palombaro}
\address[Mariapia Palombaro]{DISIM, Universit\`a dell'Aquila, Via Vetoio, 67100 L'Aquila, Italy}
\email{mariapia.palombaro@univaq.it}
\date{\today}
\pgfplotsset{compat=1.18} 
\begin{document}
\begin{abstract}
We prove the stability under lamination of a set of real, symmetric 3$\times$3 matrices that can be viewed as a subset of the effective conductivities of a polycrystal.  Constructed in the companion paper \cite{ANP}, such set in combination with several previous constructions \cite{ACLM}
provides the best inner bound known so far on the  $G$-closure of a three dimensional polycrystal.

\par\medskip
{\noindent\textbf{Keywords:}
Effective conductivity, differential inclusions, laminates, stability under lamination.
}
\par{\noindent\textbf{MSC2020:} 
35B27, 49J45}
\end{abstract}

\maketitle

\thispagestyle{empty}

\section{Introduction}
\noindent 
In the recent paper \cite{ANP} the authors address the problem of finding solutions to a differential inclusion arising in the context of bounding the effective conductivity of a three-dimensional polycrystal, \cite{ACLM}, \cite{NM}, \cite{Sch}.
In the present follow-up paper, we focus on a certain property of the class of solutions 
described in \cite{ANP}. 
We start by recalling the differential inclusion and refer the reader to \cite[\S 2]{ANP} for its 
physical motivation and the exact link with the polycrystal problem.

Let $\mathbb M^{3\times 3}_{\rm sym}$ denote the set of real, symmetric $3\times 3$ matrices and let 
$ S \in\mathbb M^{3\times 3}_{\rm sym}$ be given by
\begin{equation}\label{S}
S=\left(
\begin{array}{ccc}
s_1&0&0\\
0&s_2&0\\
0&0&s_3
\end{array}
\right)
\end{equation}
subject to the constraints
\begin{equation}\label{cS}
0<s_1 < s_2< s_3,\quad s_1+s_2+s_3=1.
\end{equation}

We define the 
set $K(S)\subset \mathbb M^{3\times 3}_{\rm sym}$ as follows:
\begin{equation}\label{setK}
K(S) :=\{
\lambda R^t S R: \lambda\in\R, \, R\in SO(3)
\}.
\end{equation}
Notice that $K(S)$ is unbounded.
Set  $C=[0,1]^3$ and denote by $W_{C}^{1,2}(\mathbb R^3;\mathbb R^3)$  the space of vector fields in 
$W^{1,2}_{\rm loc}(\mathbb R^3;\mathbb R^3)$ that are $C$-periodic.  
We look for  $A\in\ \mathbb M^{3\times 3}_{\rm sym}$ such that the following differential inclusion admits solutions
\begin{equation}\label{diffin}
 \nabla u\in K(S) \, \text{ a.e.}, \quad u-Ax \in W_{C}^{1,2}(\R^3; \R^3).
\end{equation}
Solutions are understood in the approximate sense of the existence of sequences $\{u_j\}\subset W_{C,A}^{1,2}\equiv W_{C}^{1,2}(\R^3; \R^3)+ A x$, 
that are bounded in $W^{1,2}_{\text{loc}}(\R^3;\R^3)$ and such that $\{\nabla u_j\}$ is 
$L^2_{\text{loc}}$-equi-integrable and
\begin{equation*}
{\dist} (\nabla u_j, K(S))\to 0 \text{ locally in measure}. 
\end{equation*}
We denote the set of all such $A$'s as $K^{app}(S)$ and point out that  $K^{app}(S)$ is related to the so-called 2-quasiconvex hull of $K(S)$ as defined in \cite{Yan}, with $K^{app}(S)$ being a subset if it 
(see also  \cite{Zhang} for a less restrictive definition of 2-quasiconvex hull). 
By definition of $K(S)$,
$K^{app}(S)$ is invariant under conjugation by any rotation, i.e., if $A\in K^{app}(S)$ then $ R^t A R \in  K^{app}(S)$ for each $R\in SO(3)$. Therefore, it suffices to characterize the eigenvalues of the 
elements of $K^{app}(S)$, which can thus be identified with a subset of $\R^3$. 
In fact, because of its original physical motivation (see \cite[\S 2]{ANP}), we are interested in those elements of $K^{app}(S)$ whose eigenvalues lie in the interval  $[s_1,s_3]$.  
Moreover, since $K^{app}(S)$ is a cone, we may then focus on the 
specific section Tr$A=1$. 
Denoting by  $\H$ the closed hexagon in $\R^3$ of vertices at $(s_1,s_2,s_3)$ and all of its permutations, our 
goal becomes to characterise the following set

\begin{equation}
 K^*(S):= \left\{A\in K^{app}(S)\!: A\in \H
\right\}.
\end{equation}
The exact characterisation of $K^*(S)$ is currently an open problem. 
The main contribution of \cite{ANP} is an inner bound for $ K^*(S)$,  denoted by $\L$,  
which improves all the bounds that were previously known ( \cite{ACLM}, \cite{NM} and \cite{Nesi}). 
The goal of the present paper is to prove that  $\L$ enjoys the stability under lamination property (see Definition \ref{def-stability}). 
Such property would be naturally fulfilled if we knew that $\L$ coincides with 
 its 2-rank-one convex hull or with $ K^*(S)$ (see, e.g., \cite{Mnotes}, \cite{Zhang}). 
However, in the absence of such results, we need to 
prove it using the very definition of stability. 
Our proofs are performed in the space of symmetric invariants rather than that of the eigenvalues. This feature is likely to be useful in different contexts where a given set enjoys the same invariance properties as $K^{app}(S)$.
\par
Let us briefly describe the connection of $\L$ with the so-called $G$-closure of a
polycrystalline conducting material.  
In the theory of composite materials,
the set of all possible effective conductivities arising as a mixture of a single crystal with anisotropic conductivity $\sigma$,
\begin{equation}\label{sigma}
\sigma=\left(
\begin{array}{ccc}
\sigma_1&0&0\\
0&\sigma_2&0\\
0&0&\sigma_3
\end{array}
\right),\quad 0<\sigma_3\leq \sigma_2\leq \sigma_1,
\end{equation}
is known as the $G$-closure, and is commonly denoted by $G(\sigma)$, see \cite{ACLM}, \cite{NM} and \cite{masterpiece}. 
 It has been established that partial knowledge of  $\partial G(\sigma)$ can lead, in some cases,  to the knowledge of larger parts of it. An interesting example is found in \cite{MN} (for more details see pp. 204 and 645--647 of \cite{masterpiece}). The underlying idea is based upon previous work done in \cite{Fra-M} and \cite{MSL}.
 The set $\L$ described in \cite{ANP} provides a new portion of $\partial G(\sigma)$ in the case of distinct eigenvalues $\sigma_i$'s.
 We remark that explicit examples of sets which are stable under lamination but not stable under homogenization and correspond to effective properties of materials, are relatively difficult to find. A remarkable one can be found in \cite{masterpiece} (see p. 690) where, using an idea from \cite{Sv}, the author exhibits a composite consisting of seven anisotropic phases with an effective elastic tensor that cannot be realized by  laminations of the seven given phases. To the best of our knowledge, in the context of three-dimensional conductivity, no examples are yet known.
\section{Preliminary definitions and results}
The present section collects definitions and results from \cite{ANP} that will be used in the rest of the paper.
\begin{definition}\label{matrices}
We denote by  $\mathcal D\subset \mathbb M^{3\times 3}_{\rm sym}$  the set of positive definite diagonal matrices with unit trace and by 
 $\mathcal D_{\rm d}\subset\mathcal D$ its subset of matrices with distinct eigenvalues.
The eigenvalues of a matrix $M\in \mathcal D$ are denoted by lower case letters  $m_1,m_2$ and $m_3$ and ordered as follows:
\begin{equation}\label{stage}
    0<m_1\leq m_2\leq m_3.
\end{equation}
\end{definition}
Note that $S \in \mathcal D_{\rm d}$, since its eigenvalues are distinct. 
In what follows $\Sd$ denotes the set of unit vectors in $\R^3$.

\begin{definition}\label{MT}
\noindent
Let $F\in \mathcal D_{\rm d}$.    
The set
${\mathcal T^{1}(F)}$ is the set of all  $G\in \mathcal D$ such
 that
\begin{equation}\label{oldbp}
\begin{array}{cc}
\exists \,R\in SO(3), n\in\Sd, \la \in \mathbb R\backslash\{1\} :
&
R^tGR = \la\, F +(1-\la) n\otimes n. 
\end{array}\end{equation}
\end{definition}
\begin{definition}\label{A(F,G)}
Assume that $F\in \mathcal D_d$  and $G\in \mathcal D$. Set
\begin{align*}
& \al_- (F,G) :=\max\left(\frac{g_1}{f_2},\frac{g_2}{f_3}\right), 
& \al_+ (F,G):= \min\left(\frac{g_1}{f_1},\frac{g_2}{f_2},\frac{g_3}{f_3}\right) ,\\
&\beta_-(F,G):= \max \left(\frac{g_1}{f_1},\frac{g_2}{f_2},\frac{g_3}{f_3}\right),
&\beta_+(F,G):=\min \left(\frac{g_2}{f_1},\frac{g_3}{f_2}\right), \\
& A_\alpha(F,G):=[\al_- (F,G), \al_+ (F,G)],
& A_\beta(F,G):=[\beta_-(F,G), \beta_+(F,G)],\\
&A(F,G) :=\Big(A_\alpha(F,G)\cup A_\beta(F,G)\Big)\setminus\{1\} \,. 
\end{align*}
The interior and the boundary of $A(F,G)$ are denoted by $\rm{Int} A(F,G)$ and $\partial A(F,G)$ respectively, and we adopt the convention $[a,b] =\emptyset$, if $a>b$.
\end{definition}

\begin{remark}\label{rem100}
From the definition it follows that  $\alpha_+(F,G)\leq 1\leq \beta_-(F,G)$ and that  $\alpha_+(F,G)=\beta_-(F,G)=1$ if and only if $f_i=g_i$ for each $i=1,2,3$.  
Hence, the set $A(F,G)$ is always the union of two disjoint, possibly empty, bounded intervals. 
\end{remark}
\noindent
The next algebraic lemma clarifies when condition \eqref{oldbp} holds.

\begin{Lemma}\label{LemmaF} 
Let $F\in \mathcal D_d$,  $G\in \mathcal D$. 
If $G\in \mathcal T^1(F)$,  then $\la \in  A(F,G)$. Conversely, if $A(F,G) \neq \emptyset$, then $G\in  \mathcal T^1(F)$ and, in particular,  for each
$\la\in  A(F,G)$, the vector $n=(n_1,n_2,n_3)$ that satisfies \eqref{oldbp} is 
determined, not uniquely, by the following equations
\begin{equation}\label{ni}
\begin{array}{cc}
\displaystyle{ n_1^2(F,G,\la):= \frac{(g_1 -\la f_1)(g_2 -\la f_1)(g_3 -\la f_1)}{\la^2(1-\la)
(f_2 - f_1)(f_3 - f_1)}},
\\\\
\displaystyle{n_2^2(F,G,\la) := \frac{(g_1 -\la f_2)(g_2 -\la f_2)(g_3 -\la f_2)}{\la^2(1-\la)
(f_3 - f_2)(f_1 - f_2)}},
\\\\
\displaystyle{n_3^2(F,G,\la):= \frac{(g_1 -\la f_3)(g_2 -\la f_3)(g_3 -\la f_3)}{\la^2(1-\la)
(f_1 - f_3)(f_2 - f_3)} }.
\end{array}
\end{equation}
In particular $F\in \mathcal T^{1}(F)$ since 
 \begin{equation*}
 A(F,F)=\left[\alpha_-(F,F),\frac{1}{\alpha_-(F,F)}\right]\setminus\{1\} =
 \left[\max\left(\frac{s_1}{s_2}, \frac{s_2}{s_3}\right), \min\left(\frac{s_2}{s_1}, \frac{s_3}{s_2}\right)\right]\setminus\{1\}\neq \emptyset.
\end{equation*}
\end{Lemma}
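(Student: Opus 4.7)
The plan is to convert the matrix equation \eqref{oldbp} into three scalar equations for the components of $n$ via the characteristic polynomial, and then to identify the admissible range of $\lambda$ by a sign analysis of those equations.

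For the necessary direction, suppose $R^{t}GR=\lambda F+(1-\lambda)n\otimes n$. Since conjugation preserves the spectrum, both sides have eigenvalues $g_1,g_2,g_3$; since the right-hand side is a rank-one perturbation of the diagonal matrix $\lambda F$, the matrix determinant lemma gives
\begin{equation*}
\det\bigl(\lambda F+(1-\lambda)n\otimes n-\mu I\bigr)=\prod_{i=1}^{3}(\lambda f_i-\mu)+(1-\lambda)\sum_{i=1}^{3}n_i^{2}\prod_{j\neq i}(\lambda f_j-\mu).
\end{equation*}
Equating this with $\prod_j(g_j-\mu)$ and evaluating at $\mu=\lambda f_k$, so that all but one term of the sum vanishes, yields exactly the formulas \eqref{ni} for $n_k^{2}$. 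The constraint $|n|=1$ is automatic: comparing traces in \eqref{oldbp} gives $1=\lambda+(1-\lambda)|n|^{2}$ and hence, since $\lambda\neq 1$, $|n|^{2}=1$.

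The key technical step is now to characterise the $\lambda$'s for which all three right-hand sides in \eqref{ni} are nonnegative. Each denominator factors as $\lambda^{2}(1-\lambda)$ times a nonzero constant whose sign is fixed by the ordering $f_1<f_2<f_3$, while each numerator is a cubic in $\lambda$ with simple roots at $g_j/f_i$ for $j=1,2,3$. Splitting into the cases $\lambda<1$ and $\lambda>1$ and intersecting the three sign conditions, one checks that the feasible range reduces to $[\max(g_1/f_2,g_2/f_3),\min_i(g_i/f_i)]=A_\alpha(F,G)$ in the first case and to $[\max_i(g_i/f_i),\min(g_2/f_1,g_3/f_2)]=A_\beta(F,G)$ in the second, so that $\lambda\in A(F,G)$. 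I expect this bookkeeping to be the main obstacle, since each of the three sign conditions separately allows two disjoint intervals and only the ordering of the $f_i$'s forces them to combine into a single interval in each case.

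For the converse, given $\lambda\in A(F,G)$ define $n_i$ as any square root of the corresponding expression in \eqref{ni} and set $M:=\lambda F+(1-\lambda)n\otimes n$. By construction the characteristic polynomial of $M$ agrees with $\prod_j(g_j-\mu)$ at the three distinct points $\mu=\lambda f_k$; both are cubics in $\mu$ with leading coefficient $-1$, hence coincide, so $M$ has spectrum $\{g_1,g_2,g_3\}$. Being symmetric with the same eigenvalues as $G$, $M$ is orthogonally similar to $G$, and a choice of eigenvector orientations yields $R\in SO(3)$ with $R^{t}GR=M$; the sign ambiguity in $n_i$ explains the stated non-uniqueness. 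Finally, for $G=F$ one has $\alpha_+(F,F)=\beta_-(F,F)=1$ and $\alpha_-(F,F)=\max(f_1/f_2,f_2/f_3)<1<\min(f_2/f_1,f_3/f_2)=\beta_+(F,F)$, so $A(F,F)\neq\emptyset$ and the converse direction delivers $F\in\mathcal{T}^{1}(F)$.
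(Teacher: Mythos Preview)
Your proposal is correct and follows essentially the same route the paper indicates: the paper does not spell out a proof here but refers to \cite[Lemma~3.4]{ANP}, and the characteristic-polynomial/rank-one-update computation you use is exactly the mechanism that resurfaces later in the paper as~\eqref{cpnew}--\eqref{coeff}. The only part you leave as ``bookkeeping'' --- the sign analysis showing that nonnegativity of all three $n_i^{2}$ cuts the $\lambda$-axis down to $A_\alpha(F,G)\cup A_\beta(F,G)$ --- is indeed routine once one orders the nine ratios $g_j/f_i$ and tracks the sign changes of each cubic numerator against the fixed sign of its denominator; no hidden obstacle appears there.
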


\begin{remark} 
In \cite{ANP} a slightly weaker version of Lemma \ref{LemmaF} was established and the notation was slightly different. Specifically, in \cite[Lemma 3.4]{ANP} it is assumed that 
the eigenvalues of $F$ and $G$ satisfy, in addition, 
\begin{equation}\label{wo}
 f_1\leq g_1\leq  g_2\leq g_3\leq f_3.
\end{equation}
The proof of Lemma \ref{LemmaF} follows the lines of the proof of \cite[Lemma 3.4]{ANP} with the only difference given by the formulas of $\al_+(F,G)$ and  
$\beta_-(F,G)$, which, in the case when \eqref{wo} is satisfied, read as 
\begin{align*} 
& \al_+ (F,G)= \min\left(\frac{g_2}{f_2},\frac{g_3}{f_3}\right) ,
&\beta_-(F,G)= \max \left(\frac{g_1}{f_1},\frac{g_2}{f_2}\right).
\end{align*}
\end{remark}
\begin{remark}\label{not-uni}
One may wonder whether Lemma \ref{LemmaF} misses some interesting rank-one connections by assuming $F\in \mathcal D_d$. This is not the case. Indeed, one can check that if both $F$ and $G$ are uniaxial, so that Lemma \ref{LemmaF} cannot be applied, then only one possible rank-one connection exists and it requires $n$ be a vector of the canonical basis  and that the double eigenvalue be the smallest for both matrices, or the largest for both matrices. These are well-known connections, already studied in \cite{ACLM}, and can be visualised with the help of Figure \ref{fig:sextant-annotated} as segments  lying either on the vertical dotted line or on the oblique dotted line. 
For this reason, we may disregard the case when $F\notin \mathcal D_d$.
\end{remark}
\section{The set $\L$}\label{sec-t2-bis}
We recall the definition of the set $\L$, which provides an inner bound for the set $K^{*}(S)$ 
(see \cite[\S 5]{ANP} for more details).
We start by introducing the uniaxial points $U_{\alpha}$ and 
$U_{\beta}$, and the rotations $R_\alpha$ and $R_\beta$ (see Definition \ref{def_U}).
We then define $\Gamma_{\alpha}$ and $\Gamma_{\beta}$ as the projection on $\H$ of the rank-one segments connecting a specific multiple of $S$ to 
$R_\alpha^t U_{\alpha}R_\alpha$ and $R_\beta^t U_{\beta}R_\beta$ respectively (Definition \ref{def33}).  
The set $\L$ is finally defined as the subset of $\H$ enclosed by  $\Gamma_{\alpha}$, $\Gamma_{\beta}$ and appropriately reflected and rotated copies of $\Gamma_{\alpha}$ and $\Gamma_{\beta}$ (see Definitions \ref{def:Gamma-construction}, \ref{matL} and  Figures~\ref{fig:sextant-annotated}-\ref{fig:hexagon-annotated}).
\begin{definition}\label{def_U}
    Set \begin{equation}\label{upm}
\begin{array}{cc}
U_{\alpha}=
\left(
\begin{array}{ccc}
u_{\alpha}&0&0\\
0&u_{\alpha}&0\\
0&0&1-2 u_{\alpha}
\end{array}
\right), 
\quad U_{\beta}=
\left(
\begin{array}{ccc}
1-2 u_{\beta}&0&0\\
0&u_{\beta}&0\\
0&0&u_{\beta}
\end{array}
\right),
\end{array}
\end{equation}
where $u_{\alpha}$ and $u_{\beta}$ are the smallest and largest root of 
\begin{equation}\label{rootU}
b(x):=6 s_2\,x^2+ x\,(s_1 s_3-3 s_2-4 s_2^2)+ 2 s_2^2
\end{equation}
respectively.
Set
\begin{equation}\label{mpd1}
\begin{array}{lll}
n_\alpha=\left(
\begin{array}{c}
\cos \varphi_\alpha\\0\\ \sin \varphi_\alpha\end{array}
\right),\quad
R_\alpha=
\left(
\begin{array}{ccc}
0&-1&0\\
-\cos \theta_\alpha &0&\sin \theta_\alpha\\
-\sin \theta_\alpha &0&-\cos  \theta_\alpha
\end{array}
\right),\\\\
n_\beta=\left(
\begin{array}{c}
\cos \varphi_\beta\\0\\ \sin \varphi_\beta\end{array}
\right),\quad
R_\beta=
\left(
\begin{array}{ccc}
\cos \theta_\beta&0&-\sin \theta_\beta\\
\sin \theta_\beta&0&\cos  \theta_\beta\\
0&-1&0
\end{array}
\right),
\end{array}
\end{equation}
with 
\begin{equation}\label{mpd2}
    \cos (2\varphi_\alpha)=\frac{s_2(s_1+s_3)+u_\alpha(2 s_1 s_3-s_1-s_3)}{(s_3-s_1)(s_2-u_\alpha)},
     \, \cos (2\varphi_\beta)=\frac{s_2(s_1+s_3)+u_\beta(2 s_1 s_3-s_1-s_3)}{(s_3-s_1)(s_2-u_\beta)},
\end{equation}
\begin{equation}    \label{mpd3}
\begin{array}{ll}
\displaystyle{\cos(2 \theta_\alpha) =\frac{u_\alpha(s_3-s_1)+(u_\alpha
-s_2) \cos(2 \varphi_\alpha)}{s_2(1-3 u_\alpha)},\cos(2 \theta_\beta) =-\frac{u_\beta(s_3-s_1)+(u_\beta
-s_2) \cos(2 \varphi_\beta)}{s_2(1-3 u_\beta)}.}
\end{array}
\end{equation}
\end{definition}
The next proposition collects results from \cite{ANP} (see \cite[Proposition 5.3]{ANP}), characterising the matrices of $U_\alpha$ and $U_\beta$ 
as solutions of a specific matrix equation.
Set 
\begin{equation}\label{alfabeta}
\alpha:=  \frac{u_{\alpha}}{s_2},\quad  \beta:=  \frac{u_{\beta}}{s_2}.
\end{equation}
\begin{proposition}\label{prop_opt_curve}
Let $U_\alpha, U_\beta, R_\alpha, R_\beta, n_\alpha, n_\beta$ be defined by \eqref{upm}-\eqref{mpd1}.
Then 
\vspace{2mm}
\noindent
\begin{itemize}
\item[(i)]
 $u_\alpha$ and $u_\beta$ satisfy
$ s_1\leq   u_{\alpha}<\frac 1 3< u_{\beta}\leq  s_3$;
\vspace{2mm}
\item[(ii)]
$
\alpha_-(S,U_{\alpha})=\alpha_+(S,U_{\alpha}) = \alpha,\quad
\beta_-(S,U_{\beta})= \beta_+(S,U_{\beta}) = \beta;
$
\vspace{2mm}
\item[(iii)]
the matrices $U_\alpha, U_\beta$  are the unique solutions to
\begin{equation}\label{2791}
 R_\alpha^t  U_{\alpha}R_\alpha = \alpha S +(1-\alpha)\, n_\alpha\otimes n_\alpha,
\end{equation}
\begin{equation}\label{2792}
 R_\beta^t  U_{\beta}R_\beta = \beta S +(1-\beta)\, n_\beta\otimes n_\beta,
\end{equation}
respectively.
 \end{itemize}
\end{proposition}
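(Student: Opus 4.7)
The plan is to address the three parts in order, since (ii) builds on (i) and (iii) builds on both.

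For (i), I study the quadratic $b(x) = 6 s_2 x^2 + x(s_1 s_3 - 3 s_2 - 4 s_2^2) + 2 s_2^2$ directly. Its leading coefficient is positive, $b(0) = 2 s_2^2 > 0$, and by Vieta's formulas the roots (once shown real and positive) multiply to $s_2/3$ with positive sum. Direct computation gives
\[
 b(1/3) = \tfrac{1}{3}\bigl(2 s_2^2 + s_1 s_3 - s_2\bigr),
\]
and, after eliminating $s_1 = 1 - s_2 - s_3$, the bracket becomes a downward parabola in $s_3$ with zeros at $s_3 = s_2$ and $s_3 = 1 - 2 s_2$; the constraint $s_1 < s_2$ forces $s_3 > \max(s_2, 1 - 2 s_2)$, so $b(1/3) < 0$, placing the two real roots on either side of $1/3$. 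Using the same substitution, one can factor $b(s_1) = (s_2 - s_1)\bigl[2(1 - 2 s_1) s_2 - s_1 (1 - s_1)\bigr]$, and the trivial estimate $s_1 < 1/3$ gives $s_1 \geq s_1(1-s_1)/[2(1-2s_1)]$, so $s_2 > s_1$ already makes the bracket nonnegative. Thus $b(s_1) \geq 0$ and $u_\alpha \geq s_1$. The symmetric argument, noting that $b$ is symmetric in $s_1$ and $s_3$, gives $u_\beta \leq s_3$.

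For (ii), take $G = U_\alpha$ (the $U_\beta$ case is symmetric). By (i), the ordered eigenvalues of $U_\alpha$ are $(u_\alpha, u_\alpha, 1 - 2 u_\alpha)$, so $\alpha_-(S, U_\alpha) = u_\alpha/s_2 = \alpha$ is immediate from $s_2 < s_3$. The equality $\alpha_+(S, U_\alpha) = \alpha$ then reduces to the single nontrivial inequality $u_\alpha/s_2 \leq (1 - 2 u_\alpha)/s_3$, equivalently $u_\alpha \leq s_2/(s_3 + 2 s_2)$. A direct calculation yields
\[
 b\bigl(\tfrac{s_2}{s_3 + 2 s_2}\bigr) = \frac{s_2 s_3}{(s_3 + 2 s_2)^2}\,\bigl(2 s_2^2 + s_1 s_3 - s_2\bigr),
\]
which is negative by the same sign analysis used in (i); hence $s_2/(s_3 + 2 s_2)$ lies strictly between the two roots of $b$, giving the desired bound on $u_\alpha$.

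Part (iii) is then a direct application of Lemma \ref{LemmaF}: by (ii), $\alpha \in A(S, U_\alpha)$, so there exist $R \in SO(3)$ and $n \in \Sd$ satisfying \eqref{oldbp} with $F = S$, $G = U_\alpha$, $\lambda = \alpha$. Substituting $\lambda = u_\alpha/s_2$ into \eqref{ni} forces $n_2^2 = 0$ because the factor $g_2 - \lambda f_2 = u_\alpha - \alpha s_2$ vanishes, so $n = (n_1, 0, n_3)$; computing $n_1^2$ and using $b(u_\alpha) = 0$ to simplify should reproduce $(1 + \cos 2 \varphi_\alpha)/2$ with $\cos 2 \varphi_\alpha$ exactly as in \eqref{mpd2}. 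The rotation $R_\alpha$ is then fixed (modulo the $O(2)$ ambiguity coming from the repeated eigenvalue) by requiring that it diagonalize $\alpha S + (1 - \alpha) n_\alpha \otimes n_\alpha$ into $U_\alpha$, pinning down $\cos 2 \theta_\alpha$ via \eqref{mpd3}; one then verifies \eqref{2791} by direct matrix computation. The $U_\beta$ case runs along the same lines, with the sign in \eqref{mpd3} accounting for the reversed eigenvalue ordering, and uniqueness of $U_\alpha$ given the triple $(R_\alpha, n_\alpha, \alpha)$ is automatic from the equation itself. The main obstacle I anticipate is precisely this last matrix verification: because $U_\alpha$ is uniaxial, the rotation diagonalizing the right-hand side of \eqref{2791} is not canonical, and justifying that the specific trigonometric representative written down in \eqref{mpd1} is the correct one requires careful manipulation of the identities \eqref{mpd2}--\eqref{mpd3}.
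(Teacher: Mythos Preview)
The paper itself does not prove this proposition: it is imported verbatim from the companion paper \cite{ANP} (specifically \cite[Proposition~5.3]{ANP}), so there is no in-text argument to compare against.

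Your treatment of (i) and (ii) is correct and essentially complete. The evaluation $b(1/3)=\tfrac{1}{3}(2s_2^2+s_1s_3-s_2)$ and the factorization $b(s_1)=(s_2-s_1)\bigl[2(1-2s_1)s_2-s_1(1-s_1)\bigr]$ both check out, and your sign analysis correctly places $1/3$ strictly between the two roots and $s_1$ to their left. The ``symmetric argument'' for $u_\beta\le s_3$ does go through (factor $b(s_3)=(s_2-s_3)\bigl[2(1-2s_3)s_2-s_3(1-s_3)\bigr]$ and use $s_3>1/3$ in place of $s_1<1/3$), though it would be worth writing out since the two sign reversals need to be tracked. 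The computation
\[
b\!\left(\frac{s_2}{s_3+2s_2}\right)=\frac{s_2 s_3}{(s_3+2s_2)^2}\,(2s_2^2+s_1s_3-s_2)
\]
is also correct, which settles $\alpha_+(S,U_\alpha)=\alpha$; the $\beta$-side is indeed analogous.

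For (iii) your strategy is the right one: part (ii) gives $A_\alpha(S,U_\alpha)=\{\alpha\}$, Lemma~\ref{LemmaF} then supplies $R$ and $n$ with $n_2=0$ (since $g_2-\alpha f_2=0$), and what remains is to match the explicit expressions \eqref{ni} against \eqref{mpd2}--\eqref{mpd3}. However, you stop short of actually carrying out this match, and you yourself flag the diagonalization step---recovering $\theta_\alpha$ and verifying \eqref{2791} for the specific $R_\alpha$ in \eqref{mpd1}---as the ``main obstacle.'' This is not a wrong turn, just an incomplete one: the algebra is mechanical, and the identity $b(u_\alpha)=0$ is precisely the simplifying relation needed to collapse the formulas for $n_1^2$ and $\cos 2\theta_\alpha$. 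As written, part (iii) is a sound sketch rather than a finished proof.
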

The matrices $U_\alpha$ and $U_\beta$  correspond to the blue and orange points in  Figure~\ref{fig:sextant-annotated}.
In order to define $\Gamma_\alpha$ and $\Gamma_\beta$ we will need an efficient way to describe curves  in eigenvalue space. We therefore introduce some notation that 
will be used in Section \ref{More} as well.
Let $F, G\in K^*(S)$ and assume that 
\eqref{oldbp} holds.
For each $\lambda\in A(F,G)$ and corresponding $n$, see \eqref{ni}, 
consider the one-parameter family of unit-trace matrices
\begin{equation}\label{s11}
\begin{array}{l}\displaystyle{M_{F,G}(\lambda,t) 
=\eta(\lambda,t) F+(1-\eta(\lambda,t)) n\otimes n,\quad   \eta(\lambda,t):= \frac{\lambda}{\lambda + t(1-\lambda)}},
\end{array}
\end{equation}
and note that
\begin{equation*}
    M_{F,G}(\lambda,0)=F,\quad M_{F,G}(\lambda,1)=R^tGR.  
\end{equation*}
We now specialize \eqref{s11} to the case when  $F=S,$ and $\lambda=\alpha$ or 
$\lambda=\beta$, see \eqref{alfabeta},  and set
\begin{equation}\label{Gamma}
\begin{array}{l}
M_\alpha(t):=M_{S,U_\alpha}(\alpha,t),\quad
M_\beta(t):=M_{S,U_\beta}(\beta,t). 
\end{array}
\end{equation}
We recall that 
\begin{equation}\label{endpoints}
M_\alpha(1)=R_\alpha^t U_\alpha R_\alpha,\quad M_\beta(1)=R_\beta^t U_\beta R_\beta.
\end{equation} 
\begin{definition}\label{def33}
The curves $\Gamma_\alpha, \Gamma_\beta$ are defined as follows. Consider the parametric curves
\begin{equation}\label{m1m2m3}
   t\to m_\alpha(t):=(m_1(\alpha,t),m_2(\alpha,t),m_3(\alpha,t)),   \quad t\in[0,1],  
\end{equation}
\begin{equation}\label{m1m2m3p}
       t\to m_\beta(t):=(m_1(\beta,t),m_2(\beta,t),m_3(\beta,t)), \quad t\in[0,1],
\end{equation}
where $ m_i(\alpha,t), \, m_i(\beta,t) $ denote
the ordered  eigenvalues of $M_\alpha(t)$ and $M_\beta(t)$.
Then
   \begin{equation*}
\Gamma_{\alpha}:=m_\alpha([0,1]),\quad 
\Gamma_{\beta}:=m_\beta([0,1]).
  \end{equation*}
\end{definition}

The curves $\Gamma_\alpha, \Gamma_\beta$ are shown in Figure~\ref{fig:sextant-annotated}.

\begin{figure}\label{fig:curves-annotated}
\begin{subfigure}{0.33\textwidth}
\includegraphics[width=\textwidth]{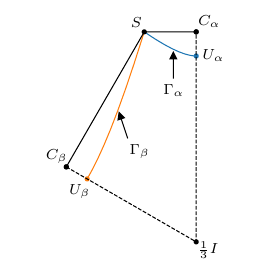}\caption{}\label{fig:sextant-annotated}
\end{subfigure}%
\begin{subfigure}{0.33\textwidth}
\includegraphics[width=\textwidth]{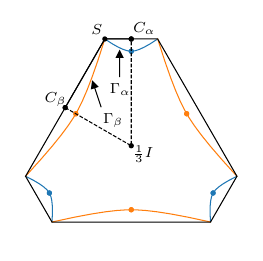}\caption{}\label{fig:hexagon-annotated}
\end{subfigure}%
\begin{subfigure}{0.33\textwidth}
\includegraphics[width=\textwidth]{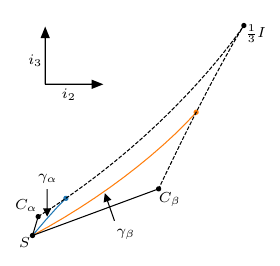}\caption{}\label{fig:invariants-annotated}
\end{subfigure}
\caption{
The left figure shows important fields in one sextant ($s_1\le s_2 \le s_3$) of the unit-trace plane. The outer quadrilateral connects the field $S$ to the isotropic field $\frac{1}{3}I$ and the two uni-axial points, $C_{\alpha}=(\frac{s_1+s_2}{2},\frac{s_1+s_2}{2},s_3)$ and $C_{\beta}=(s_1,\frac{s_2+s_3}{2},\frac{s_2+s_3}{2})$. The curves $\Gamma_{\alpha}$ and $\Gamma_{\beta}$ from Definition~\ref{def33} are also shown, together with their intersections with the uniaxial lines (the dashed lines). The center figure shows the construction in Definition~\ref{def:Gamma-construction}. The set $\L$ is enclosed by the union of the reflected copies of $\Gamma_\alpha, \Gamma_\beta$. The rightmost figure shows the fields in the sextant once again, but in the plane of the matrix invariants, $(i_2,i_3)$, which are used in the proof of the stability.
Abusing notation we denote points by the same letters in (a)-(c).
}
\end{figure}
\begin{definition}\label{def:Gamma-construction}
The closed curve $\Gamma$ is obtained as follows.
First, reflect $\Gamma_{\alpha}$ along the line $m_1=m_2$ in the plane $m_1+m_2+m_3=1$, then consider the union of the curves obtained with its $2\pi/3$ rotations within the unit trace plane.
Next, reflect $\Gamma_{\beta}$ along the line $m_2=m_3$ in the plane $m_1+m_2+m_3=1$, and consider the union of the curves obtained with its $2\pi/3$ rotations within the unit trace plane.
Finally, $\Gamma$ is the union of the six curves thus defined.  
\end{definition}
\noindent The construction of $\Gamma$ is shown in Figure~\ref{fig:hexagon-annotated}. 
By construction, $\Gamma$ is a simple closed Jordan curve, and, for future reference, we note that  %
\begin{equation}\label{m_2}
    m_2(\alpha,t) =s_2 \eta(\alpha,t),\quad m_2(\beta,t) =s_2 \eta(\beta,t),
\end{equation}
 which readily follows from the fact that the second component of $n_\alpha$ and $n_\beta$ are zero.
\begin{definition}\label{matL}
    We denote by $\L$ the bounded closed subset of $\H$ enclosed by $\Gamma$ (see  Figure~\ref{fig:hexagon-annotated}).
\end{definition}
\begin{theorem}[\cite{ANP}, Theorem 5.7]
 The set $\L$ of Definition \ref{matL} satisfies 
 $\L\subseteq K^*(S)$.
\end{theorem}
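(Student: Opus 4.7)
The plan is to build up $\L\subseteq K^*(S)$ in three layers, exploiting the rotational invariance of $K^{app}(S)$ to reduce the work to a single sextant $s_1\le m_1\le m_2\le m_3$: the six-fold symmetry entering Definition~\ref{def:Gamma-construction} — three reflections combined with $2\pi/3$ rotations in the unit-trace plane — then recovers all of $\L$ from that sextant piece. Inside the sextant, the region to realize is the one bounded by parts of $\Gamma_\alpha$, $\Gamma_\beta$ and the uniaxial edges of $\H$.

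The first layer handles the uniaxial corners. By Proposition~\ref{prop_opt_curve}(iii), $R_\alpha^t U_\alpha R_\alpha$ is rank-one connected to $S$ in the sense of Lemma~\ref{LemmaF}, while $U_\alpha$ has a repeated eigenvalue; its membership in $K^*(S)$ is then the standard second-rank uniaxial construction from \cite{ACLM}, in which two mutually rotated copies of $S$ are laminated with a prescribed volume fraction to realize the required uniaxial effective tensor. The same applies to $R_\beta^t U_\beta R_\beta$.

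The second layer places the entire curves $\Gamma_\alpha, \Gamma_\beta$ into $K^*(S)$. A direct computation using \eqref{s11} and \eqref{2791} yields
\[
M_\alpha(t) - S \;=\; \frac{1-\eta(\alpha,t)}{1-\alpha}\bigl(R_\alpha^t U_\alpha R_\alpha - S\bigr),
\]
so $M_\alpha(t)$ lies on the straight segment joining $S$ and $R_\alpha^t U_\alpha R_\alpha$ in $\mathbb M^{3\times3}_{\rm sym}$; since both endpoints sit in $K^*(S)$ after the first layer, a single rank-one lamination between them gives $M_\alpha(t)\in K^*(S)$, and passing to ordered eigenvalues yields $\Gamma_\alpha\subset K^*(S)$. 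The argument for $\Gamma_\beta$ is identical.

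The third layer is the interior filling: for each $A$ in the open sextant portion of $\L$, one must exhibit $F,G\in K^*(S)$ already constructed and a triple $(R,n,\lambda)$ with $\lambda\in A(F,G)$ such that the ordered eigenvalues of $\lambda F+(1-\lambda)R^tGR$ coincide with those of $A$. The natural choices take $F\in\Gamma_\alpha$ and $G\in\Gamma_\beta$ (or, near a uniaxial edge of $\H$, a uniaxial corner in place of one of them), with $n$ and $\lambda$ supplied by \eqref{ni} and Lemma~\ref{LemmaF}. I expect this third step to be the main obstacle: one must verify that the resulting three-parameter family of laminates is \emph{surjective} onto the interior of $\L\cap\text{(sextant)}$, with no gaps opening inside $\Gamma$. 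The cleanest way to organise this verification is to pass to the plane of symmetric invariants $(i_2,i_3)$ of Figure~\ref{fig:invariants-annotated}, where the unordered-eigenvalue map is smooth, so that a continuity/degree argument can certify that the image of the lamination map covers precisely the targeted region; the six-fold symmetry then completes the proof on all of $\L\subset\H$.
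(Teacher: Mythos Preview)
The present paper does not prove this theorem at all; it is simply quoted from the companion paper \cite{ANP} (their Theorem~5.7), so there is no in-paper argument to compare against.  What follows concerns only the internal soundness of your outline.

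Your second layer contains a genuine confusion.  The identity
\[
M_\alpha(t) - S \;=\; \frac{1-\eta(\alpha,t)}{1-\alpha}\bigl(R_\alpha^t U_\alpha R_\alpha - S\bigr)
\]
is correct, and it does place $M_\alpha(t)$ on the unit-trace segment from $S$ to $R_\alpha^t U_\alpha R_\alpha$.  But that segment is \emph{not} a rank-one segment: by~\eqref{2791}, $R_\alpha^t U_\alpha R_\alpha - S = (1-\alpha)(n_\alpha\otimes n_\alpha - S)$, and $n_\alpha\otimes n_\alpha - S$ has full rank (its $(2,2)$ entry is $-s_2\neq 0$ while the $(1,3)$-block is nonsingular).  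So ``a single rank-one lamination between them'' is not available.  What \eqref{2791} actually gives is a rank-one connection between $\alpha S$ and $R_\alpha^t U_\alpha R_\alpha$, two matrices with different traces.  The correct mechanism is to laminate those two (both in $K^{app}(S)$, granting your first layer), then use that $K^{app}(S)$ is a cone to rescale to unit trace; the rescaled curve coincides with $\{M_\alpha(t)\}$ as a point set, but that coincidence is not what your displayed formula is proving.

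Your first layer is also a gap rather than a step.  You invoke ``the standard second-rank uniaxial construction from \cite{ACLM}'' to place $U_\alpha,U_\beta$ in $K^*(S)$, but you never verify that those constructions reach the specific uniaxial values $u_\alpha,u_\beta$ defined as roots of the quadratic $b$ in~\eqref{rootU}.  Since the stated purpose of \cite{ANP} is precisely to \emph{enlarge} the inner bound beyond \cite{ACLM}, this identification cannot be assumed; it is either a nontrivial calculation you have not done, or it is false and $U_\alpha,U_\beta$ must be reached by some other construction (e.g.\ a self-similar or limiting argument exploiting the uniaxial degeneracy $\alpha_-(S,U_\alpha)=\alpha_+(S,U_\alpha)$ from Proposition~\ref{prop_opt_curve}(ii)).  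Your third layer, as you say yourself, is a plan for a degree argument rather than an argument.
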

\section{Stability under lamination of the set $\L$}\label{More}
We first recall the definition of stability under lamination for a subset of $K^*(S)$. Then, in Proposition \ref{prop_opt_curve_2ndpart}, we provide an equivalent description in terms of geometrical quantities. 
\par
\begin{definition}\label{def-stability}
We say that a subset $L$ of $K^*(S)$ is stable under lamination if the following holds. 
For any pair $F,G\in  L$ such that 
$\rank(\lambda F- R^tGR$) =1 for some $\lambda\in\R$ and $R\in SO(3)$, the projection on $K^*(S)$ of the rank-one segment connecting $\lambda F$ and $R^tGR$ belongs to $L$. Specifically, if $t\mapsto M_{F,G}(\lambda,t)$ is defined by \eqref{s11}, then the curve 
$t\mapsto m_{F,G}(\lambda,t)$, with $m_{F,G}$ the triple of the corresponding ordered eigenvalues, satisfies  
$m_{F,G}(\lambda,t) \in L$ for each $t\in[0,1]$.
\end{definition}
\begin{theorem}\label{th.stability_1}
The set $\L$ is stable under lamination.
\end{theorem}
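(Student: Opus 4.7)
The plan is to exploit the built-in $D_3$-symmetry of $\L$ and then pass to the plane of symmetric invariants $(i_2,i_3)$ indicated in Figure~\ref{fig:invariants-annotated}, where the boundary curves have low-degree algebraic parametrisations and stability reduces to a polynomial analysis.

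First, I would perform a symmetry reduction. Definition~\ref{def-stability} is equivariant under permutations of eigenvalues, realised by conjugation of $F$, $G$ and $R$ by signed permutation matrices, and by Definition~\ref{def:Gamma-construction} both $\Gamma$ and $\L$ are invariant under the resulting $D_3$-action generated by the $2\pi/3$ rotations and the reflections $m_1\leftrightarrow m_2$ and $m_2\leftrightarrow m_3$. It therefore suffices to handle the case in which $F$ lies in the fundamental sextant $s_1\le m_1\le m_2\le m_3\le s_3$. Using Remark~\ref{not-uni}, we may further assume $F\in\mathcal{D}_{\rm d}$, so that Lemma~\ref{LemmaF} gives a complete parametrisation of the admissible $\lambda\in A(F,G)$ and of the direction $n$ through the formulas~\eqref{ni}.

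Second, I would pass to invariants. The map $M\mapsto (i_2(M),i_3(M))$ with $i_2=m_1m_2+m_2m_3+m_3m_1$ and $i_3=m_1m_2m_3$ is $D_3$-invariant, so $\L$ descends to a planar region $\L^{\rm inv}$ whose boundary is the image of $\Gamma_\alpha\cup\Gamma_\beta$. From \eqref{s11}, \eqref{Gamma} and \eqref{m_2} one computes $t\mapsto (i_2,i_3)(M_{F,G}(\lambda,t))$ explicitly as a rational curve of low degree in the variable $\eta(\lambda,t)$; in particular, the boundary pieces $\Gamma_\alpha$ and $\Gamma_\beta$ themselves admit an explicit algebraic description in $(i_2,i_3)$. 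Because $\Gamma$ is a simple closed Jordan curve, a continuous rank-one curve that starts and ends in $\L$ leaves $\L$ only by crossing $\Gamma$ an even positive number of times, so stability reduces to an upper bound on the number of intersections of the rational rank-one curve with the algebraic boundary.

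The counting of these intersections is where the main work lies. Each intersection condition amounts to a real root of a polynomial in $t$ whose coefficients are polynomial in the data of $F$, $G$, $n$ and $\lambda$, and the degree is bounded a priori by the computation above. At endpoints lying on $\Gamma$, Proposition~\ref{prop_opt_curve}(ii) is decisive: the identities $\alpha_-(S,U_\alpha)=\alpha_+(S,U_\alpha)=\alpha$ and $\beta_-(S,U_\beta)=\beta_+(S,U_\beta)=\beta$ collapse $A(S,U_\alpha)$ and $A(S,U_\beta)$ to single points, so the only rank-one connection from $S$ to a point of the $U_\alpha$- or $U_\beta$-orbit is the one traced by $\Gamma_\alpha$ or $\Gamma_\beta$ itself. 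The subtle case, and the main obstacle, is when $F$ and $G$ both lie on $\Gamma$ but on different symmetric pieces -- one on a rotated copy of $\Gamma_\alpha$ and the other on a rotated copy of $\Gamma_\beta$. Then $A(F,G)$ can contain two disjoint intervals (Remark~\ref{rem100}), yielding several branches of rank-one families that must simultaneously be shown to remain inside $\L^{\rm inv}$. This is precisely where the invariant formulation pays off: the polynomial constraints become directly comparable with the algebraic description of the boundary, while in eigenvalue coordinates the analysis would require a cumbersome case split on orderings and on the square roots inherent in \eqref{ni}.
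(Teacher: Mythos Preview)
Your overall direction---passing to the invariant plane $(i_2,i_3)$ and exploiting the algebraic form of the rank-one curves---matches the paper's setup. But there is a genuine gap in the strategy, and the case you flag as ``subtle'' is actually the one the paper disposes of immediately.

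The key reduction you are missing is Proposition~\ref{prop_opt_curve_2ndpart}. Take any $F,G\in\L$ (not necessarily on $\Gamma$) and any rank-one trajectory $t\mapsto m(\lambda,t)$ joining them. Because every point of the trajectory satisfies $s_1\le m_j\le s_3$, the curve stays in the hexagon $\H$; hence, if it ever exits $\L$, two consecutive boundary crossings $m(\lambda,t_{i-1})$ and $m(\lambda,t_i)$ lie in the \emph{same} connected component of $\H\setminus\L$, and therefore on the \emph{same} branch $\gamma_\alpha$ or $\gamma_\beta$. The mixed case ``$F$ on a $\Gamma_\alpha$-copy, $G$ on a $\Gamma_\beta$-copy'' never has to be analysed: after this reduction, the only configuration that needs work is a rank-one arc with both endpoints on a single branch.

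Given that reduction, the paper does not count intersections at all. Instead it compares \emph{tangent slopes at the initial point}: for $F=M_\alpha(p)$, $G=M_\alpha(q)$ on $\gamma_\alpha$, one computes the normalised slope $\tau(\lambda,F,G)$ of the rank-one curve at $t=0$ (formula~\eqref{tau0}), rewrites it via \eqref{mainxy}--\eqref{mainxyz} as $h(\lambda,p,q)$ depending only on $\E_2(p),\E_2(q),u_\alpha$, and then proves (Proposition~\ref{propa}) that $h$ is maximised over $\lambda\in A(M_\alpha(p),M_\alpha(q))$ precisely at $\lambda=\Lambda(p,q)=\E_2(q)/\E_2(p)=\alpha_+$, the value that makes the trajectory coincide with $\gamma_\alpha$. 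The analogous minimisation on $\gamma_\beta$ is Proposition~\ref{propb}. This is a local computation that closes the argument; your observation that $A(S,U_\alpha)$ collapses to a single point handles only the particular pair $(S,U_\alpha)$ and does not extend to general $(M_\alpha(p),M_\alpha(q))$, where $A_\alpha$ is a genuine interval by \eqref{974}.

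Finally, your proposed intersection-counting criterion is not correctly formulated: an a~priori bound on the number of real roots of the intersection polynomial does not, by itself, prevent the curve from leaving $\L$---a curve starting and ending in the interior can cross $\Gamma$ exactly twice and still spend an entire sub-arc outside. What is needed is a sign condition (the tangent at any boundary contact points weakly inward), which is exactly the content of Propositions~\ref{propa} and~\ref{propb}.
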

\begin{proof}
This follows from Propositions \ref{prop_opt_curve_2ndpart}, \ref{propa}, and \ref{propb}.
\end{proof}

\begin{remark}
The set $K^*(S)$ is stable under lamination (see, e.g., \cite{Mnotes}, \cite{Zhang}).  
As already observed in the introduction, we do not know whether $\L=K^*(S)$ and therefore the proof of the stability of $\L$ will require a careful examination of the rank-one connections between its points. 
\end{remark}
We find it convenient to change coordinates. Rather than using the eigenvalues $m_1,m_2,m_3$, we use the symmetric invariants.
The change of variable \begin{equation*}(m_1,m_2,m_3)\mapsto(i_1,i_2,i_3):=(m_1+m_2+m_3,\, m_1 m_2+m_2 m_3+ m_3 m_1,\,m_1 m_2 m_3)\end{equation*} is a diffeomorphism, except where two eigenvalues collapse. Recall that we work with symmetric matrices $M$ with unit trace, so that, for all of them  $i_1(M)=1$.
Let $F\in \mathcal D_d$ and $G\in \mathcal D$, then, using the notation in  \eqref{s11} we set 
\begin{align}\label{dernot}
&  x(\lambda,t,F,G):= i_2(M_{F,G}(\lambda,t)),\quad   y(\lambda,t, F, G):= i_3(M_{F,G}(\lambda,t)),\\
\label{dernot2} 
&  \dot x(\lambda,t,F,G)=\frac{\partial x}{\partial t}(\lambda,t, F, G),\quad 
    \dot y(\lambda,t, F, G)=\frac{\partial y}{\partial t}(\lambda,t, F, G),
\end{align}
and,
\begin{equation}\label{xF}
    x_F:=x(\lambda,0,F,G),\quad x_G:=x(\lambda,1,F,G),\quad
   y_F:=y(\lambda,0,F,G),\quad y_G:=y(\lambda,1,F,G) . 
\end{equation}
See Figure~\ref{fig:invariants-annotated} for plot of important fields in these coordinates. The curves $\Gamma_{\alpha}, \Gamma_{\beta}$ will be denoted by $\gamma_{\alpha}, \gamma_{\beta}$ in the $(x,y)$-space, as clarified by the following definition.
\begin{definition}\label{defgamma}
For $M_\alpha$ and $M_\beta$ given by \eqref{Gamma} and $t\in[0,1]$, set  
$$\Phi_\alpha(t):=(i_2(M_\alpha(t)),i_3(M_\alpha(t))),\,
\Phi_\beta(t):=(i_2(M_\beta(t)),i_3(M_\beta(t))).$$
Then 
\begin{equation}\label{gamma-+}
  \gamma_\alpha:=\Phi_\alpha([0,1]),\quad \gamma_\beta:=\Phi_\beta([0,1]).
 \end{equation}
\end{definition}
Abusing notation, we identify points in matrix space with points in invariant space, so we can write $ M_\alpha(t)\in \gamma_{\alpha}$. 
Similarly, the image of $\L$ via the map $(i_2,i_3)$ is still denoted by $\L$.
\par
The following lemma ensures that  $\gamma_\alpha$ and $\gamma_\beta$ on the $(i_2,i_3)$-plane are exactly as depicted in Fig. \ref{fig:invariants-annotated}. Its proof is postponed to Section \ref{posticipato}.
\begin{Lemma}\label{davveroultimo}
The sets  $\gamma_\alpha$ and $\gamma_\beta$ are graphs of functions and intersect only at the point $(i_2(S),i_3(S))$. At such point the unit tangent vector to $\gamma_\alpha$ is obtained by a counterclockwise rotation by an angle of amplitude less than $\pi/2$ of the unit tangent vector to $\gamma_\beta$. 
\end{Lemma}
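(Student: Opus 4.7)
The plan is to exploit the fact that both $n_\alpha$ and $n_\beta$ have vanishing second component, so each $M_\alpha(t)$ and $M_\beta(t)$ is block-diagonal with scalar block $s_2\eta(\lambda,t)$. In particular $m_2 = s_2\eta$ throughout, providing a monotone reparametrization of both curves by $\eta$. Matching the remaining $2\times 2$ block against the eigenvalues of $U_\lambda$ (and using $\lambda = u_\lambda/s_2$) yields
\begin{equation*}
B_\lambda := s_3\,(n_\lambda)_1^2 + s_1\,(n_\lambda)_3^2 = \frac{(1-2u_\lambda)\,s_2^2 - u_\lambda\,s_1 s_3}{s_2 - u_\lambda},
\end{equation*}
\begin{equation*}
i_2(\eta) = \eta(s_2 + B_\lambda) + \eta^2(s_1 s_3 - s_2^2 - B_\lambda),\quad i_3(\eta) = s_2 \eta^2 [B_\lambda + \eta(s_1 s_3 - B_\lambda)].
\end{equation*}
A useful preliminary is $b(s_2) = s_2(s_2-s_1)(s_2-s_3) < 0$, which forces $u_\alpha < s_2 < u_\beta$ and hence $(s_2-u_\alpha)(s_2-u_\beta) < 0$.

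To show each curve is a graph, I would check that $di_2/d\eta$ (affine in $\eta$) has constant sign on the closed $\eta$-interval with endpoints $1$ and $\lambda$; its unique zero can be located in closed form and pushed outside this interval using $b(u_\lambda)=0$. For the intersection claim, a point $Q \in \gamma_\alpha \cap \gamma_\beta$ with $Q \ne S$ would force the matrices to share their middle eigenvalue and hence to share some $\eta_0 \in (0,1)$. Matching the product $m_1 m_3$ then collapses to $\eta_0(1-\eta_0)(B_\alpha - B_\beta) = 0$, contradicting the closed-form identity
\begin{equation*}
B_\alpha - B_\beta = \frac{(u_\beta - u_\alpha)\,s_2\,(s_2-s_1)(s_2-s_3)}{(s_2 - u_\alpha)(s_2-u_\beta)} > 0.
\end{equation*}

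For the tangent statement, differentiating at $\eta=1$ shows that the tangent to $\gamma_\lambda$ at $S$ is proportional to $c_\lambda\,(P(B_\lambda), Q(B_\lambda))$, where $P(B) := p - B$ and $Q(B) := s_2(q - B)$ with $p := s_2 + 2 s_1 s_3 - 2 s_2^2$, $q := 3 s_1 s_3$, and $c_\lambda := (u_\lambda - s_2)/u_\lambda$ (so $c_\alpha < 0 < c_\beta$). A direct expansion gives
\begin{equation*}
v_\alpha \times v_\beta = c_\alpha c_\beta \cdot s_2\,(p-q)(B_\alpha - B_\beta),\quad p-q = -(s_2-s_1)(s_2-s_3) > 0,
\end{equation*}
so since $c_\alpha c_\beta < 0$ and $B_\alpha > B_\beta$, we have $v_\alpha \times v_\beta < 0$, the required counterclockwise orientation. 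The angle bound amounts to positivity of the dot product, equivalently negativity of $E := (p-B_\alpha)(p-B_\beta) + s_2^2(q-B_\alpha)(q-B_\beta)$, which follows from the inequality chain $B_\beta < q < p < B_\alpha$: the outer two inequalities translate via the closed form for $B_\lambda$ into polynomial inequalities in $u_\lambda$ that can be verified using the identity $u_\lambda s_1 s_3 = s_2(1-2u_\lambda)(3u_\lambda - 2 s_2)$, itself a restatement of $b(u_\lambda)=0$.

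The main technical obstacle is this final chain $B_\beta < q < p < B_\alpha$: while $p > q$ is immediate and the cross-product sign is essentially free once $B_\alpha - B_\beta$ is in closed form, upgrading the (easy) counterclockwise orientation into the strictly stronger angle bound below $\pi/2$ requires careful bookkeeping with the defining quadratic.
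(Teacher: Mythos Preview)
Your approach parallels the paper's---both parametrize via the middle eigenvalue $m_2=s_2\eta$---but you organize the algebra around $B_\lambda$, whereas the paper invokes the ready-made formulas of Lemma~\ref{usid} (equation~\eqref{mainxy}). These agree once you notice that your own identity $u_\lambda s_1s_3 = s_2(1-2u_\lambda)(3u_\lambda-2s_2)$ collapses your closed form to simply $B_\lambda = 3s_2(1-2u_\lambda)$, after which your $i_2,i_3$ coincide with the paper's $x,y$.

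Two small slips. For the intersection, the $\eta$-range of $\gamma_\beta$ is $[1,\beta]$, not a subset of $(0,1)$; so the $m_2$-matching alone already forces $\eta_0=1$ and hence $Q=S$ (this is exactly the paper's argument, phrased as ``$\E_2$ strictly decreasing, $\F_2$ strictly increasing''). Your $B_\alpha\ne B_\beta$ step is correct but redundant here. For the graph claim, the zero of $di_2/d\eta$ sits exactly at the endpoint $\eta=\lambda$ (e.g.\ $\eta=\alpha$ for $\gamma_\alpha$), not strictly outside; strict monotonicity on the open interval is what you need and still have.

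Where you go further than the paper is the angle bound below $\pi/2$: the paper's proof only compares slopes at $S$ (with both $\dot x>0$), which gives the counterclockwise rotation but not, by itself, the amplitude claim. Your dot-product reduction to the chain $B_\beta<q<p<B_\alpha$ is sound, and with $B_\lambda=3s_2(1-2u_\lambda)$ one finds
\[
p-B_\lambda=\frac{2s_2}{u_\lambda}(2-3u_\lambda)(u_\lambda-s_2),\qquad
q-B_\lambda=\frac{6s_2}{u_\lambda}(1-2u_\lambda)(u_\lambda-s_2),
\]
so the required signs follow from $u_\alpha<s_2<u_\beta$ together with $u_\beta<\tfrac12$, the latter since $b(\tfrac12)=\tfrac{s_1s_3}{2}>0$.
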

\begin{proposition}\label{prop_opt_curve_2ndpart}
The set $\L$ given by Definition \ref{matL}
is stable under lamination if and only if the following geometrical property holds.   Let
$F, G$ both belong to either $\gamma_\alpha$ or $\gamma_\beta$.
Then the tangent vector at $F$ to any rank-one trajectory that starts at $F$  and ends at  $G$, points strictly inwards $\L$, except for the optimal rank-one trajectory that lies exactly on $\gamma_\alpha$ or $\gamma_\beta$.
\end{proposition}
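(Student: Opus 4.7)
My plan is to establish both directions of the equivalence in Proposition \ref{prop_opt_curve_2ndpart}, with the nontrivial content lying in the sufficiency direction.

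For the direction $(\Rightarrow)$, assume $\L$ is stable under lamination. Given $F,G$ both on $\gamma_\alpha$ (the argument for $\gamma_\beta$ is identical) with a rank-one connection, stability forces $t\mapsto m_{F,G}(\lambda,t)$ to remain in $\L$ for all $t\in[0,1]$. Since $F\in\partial\L$, the initial tangent $(\dot x(\lambda,0,F,G),\dot y(\lambda,0,F,G))$ cannot point strictly outward. By Lemma \ref{davveroultimo}, $\gamma_\alpha$ is a smooth graph near $F$, so the only tangent direction at $F$ aligned with $\partial\L$ is the unit tangent to $\gamma_\alpha$. Among the rank-one trajectories from $F$ ending on $\gamma_\alpha$, the one that coincides with the arc of $\gamma_\alpha$ from $F$ to $G$ is precisely the sub-segment of the defining trajectory of $\gamma_\alpha$; any other admissible direction from $F$ (parameterised by $\lambda\in A(F,G)$ and $n$ via the formulas of Lemma \ref{LemmaF}) produces a different tangent in $(i_2,i_3)$-space, which by stability must therefore be strictly inward.

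For the nontrivial direction $(\Leftarrow)$, I would argue by contradiction. Assume the geometric property and suppose $F,G\in\L$ are connected by a rank-one segment whose trajectory $t\mapsto m_{F,G}(\lambda,t)$ is not contained in $\L$. Set $t^*:=\inf\{t\in(0,1]:m_{F,G}(\lambda,t)\notin\L\}$ and $P:=m_{F,G}(\lambda,t^*)\in\partial\L$; the tangent at $t^*$ then does not point strictly inward. By Definition \ref{def:Gamma-construction}, $\partial\L$ is the union of six arcs obtained from $\gamma_\alpha,\gamma_\beta$ by reflections and $2\pi/3$-rotations within the unit-trace plane. These symmetries are conjugations by permutation matrices and hence commute with the rank-one relation and with the ordered-eigenvalue map. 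Applying the appropriate symmetry, one may assume $P\in\gamma_\alpha\cup\gamma_\beta$. Since $G\in\L$, the trajectory re-enters $\L$ at some later parameter $t^{**}\in(t^*,1]$ via a point $Q:=m_{F,G}(\lambda,t^{**})\in\partial\L$; the sub-segment on $[t^*,t^{**}]$ is itself a rank-one trajectory from $P$ to $Q$. A case analysis using Lemma \ref{davveroultimo} (the graph property of $\gamma_\alpha,\gamma_\beta$ and the fact that they meet only at $(i_2(S),i_3(S))$ with a prescribed angular order) reduces, possibly after one further symmetry, to the case $P,Q\in\gamma_\alpha$. The geometric property applied to this sub-trajectory then forces the tangent at $P$ to point strictly inward, contradicting its outward behaviour at the first-exit point.

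The main obstacle is the case analysis reducing $(P,Q)$ to the same arc: without this, the geometric property cannot be invoked directly at $P$. Establishing it requires controlling how a single rank-one trajectory in $(i_2,i_3)$-space can interact with the six-arc partition of $\partial\L$. The graph and monotonicity information of Lemma \ref{davveroultimo}, together with the explicit formulas \eqref{ni} for the admissible vector $n$ and the interval structure of $A(F,G)$ from Definition \ref{A(F,G)}, should rule out trajectories that weave between two arc types of different nature without first traversing a permutation of $(i_2(S),i_3(S))$, at which point the hexagonal symmetry allows both exit and re-entry points to be brought to a common arc.
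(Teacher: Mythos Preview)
Your overall contradiction scheme for sufficiency---exit $\L$, locate exit and re-entry points on $\partial\L$, apply the geometric hypothesis to the sub-trajectory---is exactly the paper's. The gap is precisely where you flag it: forcing the exit point $P$ and the re-entry point $Q$ onto the \emph{same} arc. The paper does not do this by the case analysis you outline (Lemma~\ref{davveroultimo}, the formulas~\eqref{ni}, the interval structure of $A(F,G)$, weaving arguments). It uses a single observation you have missed: for $t\in[0,1]$ the matrix $M_{F,G}(\lambda,t)$ is a genuine convex combination of $F$ and $R^tGR$ (write $M(t)=\tfrac{\eta-\lambda}{1-\lambda}F+\tfrac{1-\eta}{1-\lambda}R^tGR$ and check the coefficients lie in $[0,1]$), so by concavity/convexity of the extreme eigenvalues one has $s_1\le m_j(t)\le s_3$ for all $t$. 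Thus the ordered-eigenvalue trajectory never leaves $\H$. In invariant space the image of $\H$ is the sextant region of Figure~\ref{fig:invariants-annotated}, and $\L^c$ intersected with that region has exactly two connected components, one bordering $\gamma_\alpha$ and one bordering $\gamma_\beta$. Any excursion outside $\L$ that stays in the sextant must therefore begin and end on the same arc, and the hypothesis applies directly.

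A related confusion: you speak of ``the six-arc partition of $\partial\L$'' in $(i_2,i_3)$-space and then invoke the hexagonal symmetries. The six arcs are $\Gamma$, in \emph{eigenvalue} space; under the invariant map all six collapse to $\gamma_\alpha\cup\gamma_\beta$, so $\partial\L$ in invariant space already consists of just two arcs and no symmetry reduction is needed. Once you replace your proposed case analysis by the eigenvalue bound above, the argument is complete and coincides with the paper's.
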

\begin{proof}
The condition is clearly necessary. 
We focus on the sufficiency. 
 Let $F,G\in\L$ be such that  $\rank(\lambda F- R^tGR) =1$ for some $\lambda\in\R$ and $R\in SO(3)$. 
 From now on we will drop the subscript $F,G$ from all the variables.
 Let $m(\lambda,t)$ be as in Definition \ref{def-stability} and 
assume by contradiction that the set $m(\lambda,[0,1])$ intersects $\L^c$, the complement of $\L$. 
Since $m(\lambda,t)$ may cross 
$\partial \L$ only a finite number of times, say for $t=t_i, \, i=1,\dots N$, one can decompose the trajectory into a finite number of arcs $m(\lambda,[t_{i-1},t_i])$, each lying either inside $\L$, or outside $\L$ except for its endpoints which lie exactly on $\partial\L$. In the first case there is nothing to prove. 
In the second case we observe that $m(\lambda,t_{i-1})$ and  $m(\lambda,t_i)$ both belong to the same branch of 
$\partial \L$, namely, 
$m(\lambda,t_{i-1}),m(\lambda,t_{i})\in\gamma_{\alpha}$ or 
$m(\lambda,t_{i-1}),m(\lambda,t_{i})\in\gamma_{\beta}$. This is 
a consequence of the bound
$s_1\leq m_j\leq s_3$ for each $j=1,2,3$ and $t\in[0,1]$. Then a necessary and sufficient condition for the arc starting at $m(\lambda,t_{i-1})$ and ending at $m(\lambda,t_i)$ to lie in $\L$ is that the tangent vector
at  $m(\lambda,t_{i-1})$  points inwards. 
\end{proof}
\begin{remark}\label{remutile}
Using Lemma \ref{davveroultimo}, one can see that 
the geometric condition expressed in Proposition \ref{prop_opt_curve_2ndpart}, namely the 
condition for the tangent vector to point inwards, can be equivalently expressed by saying that if $F$ and $G$ belong to $\gamma_\alpha$, with $F$ closer to $S$, then the maximum of the slope of the tangent vector at $F$ among
 all the  rank-one trajectories that start at $F$ and end at $G$ is attained when the trajectory is exactly the arc of $\gamma_\alpha$ joining $F$ and $G$ 
 (see Proposition \ref{propa}). When $F$ and $G$ belong to $\gamma_\beta$, with $F$ closer to $S$, 
 then the trajectory that lies on $\gamma_\beta$ minimises the slope among all possible rank-one trajectories connecting $F$ and $G$ (see Proposition \ref{propb}).
\end{remark}
We conclude the present section by outlining the main steps towards the proof of the geometric conditions described in Remark \ref{remutile}, namely of
Propositions \ref{propa} and \ref{propb}, on which 
Theorem \ref{th.stability_1} is based.
\par
\noindent
{\em Step 1.}
Let $F\in \mathcal D_{\rm d}$ and let $G\in {\mathcal T^{1}(F)}$, see Definition  \ref{MT}, so that the set  $A(F,G)\notequal \emptyset$. 
For any $\lambda \in A(F,G)$, consider the smooth trajectory originating at $F$ and ending at $G$, see \eqref{s11}. 
Assuming $\dot{x}(\lambda,0,F,G)\neq 0$, define the ``normalised slope'' in the invariants space as
\begin{equation}\label{slopeE}
\tau(\lambda,F,G):=\left(\frac{\dot{y}(\lambda,0,F,G)}{\dot{x}(\lambda,0,F,G)}\right)\left(\frac{x_G}{y_G}\right),
\end{equation}
with $x_G$ and $y_G$ given by \eqref{dernot}-\eqref{xF}.
 
\begin{remark}
When $\dot{x}(\lambda,0,F,G)\neq 0$, the actual slope of the tangent vector at the initial point $F$ to the rank-one trajectory that starts at $F$ and ends at $G$, namely $t\to (x(\lambda,t,F,G),y(\lambda,t,F,G))$,  
is given by $\frac{\dot{y}(\lambda,0,F,G)}{\dot{x}(\lambda,0,F,G)}$.  With a slight abuse of notation, we use this terminology for $\tau(\lambda,F,G)$ instead. 
    \end{remark}
For $\dot{x}(\lambda,0,F,G)\neq 0$, we compute \eqref{slopeE}  obtaining the following
\begin{equation}\label{tau0}
\tau(\lambda,F,G)=
\displaystyle{
\frac{1-\lambda^2(3-2\lambda) \frac{y_F}{y_G}}{\lambda\bigl(1-\lambda(2-\lambda) \frac{x_F}{x_G}\bigr)}  
},
\end{equation}
see Proposition \ref{xy}.

\par
\noindent
{\em Step 2.} Motivated by  Proposition \ref{prop_opt_curve_2ndpart}, 
we specialize \eqref{tau0} to pairs $F,G$ that lie either on $\gamma_{\alpha}$ or $\gamma_{\beta}$.
We will proceed by considering in detail the case when $F,G$ lie on $\gamma_\alpha$. We will hint at the necessary modifications 
for the case of $\gamma_\beta$ in Section \ref{final7}.
Thus, we choose $F=M_{\alpha}(p)$, $G=M_{\alpha}(q)$, with  $M_\alpha$ given by \eqref{Gamma} and $0\leq p\leq q\leq 1, p\neq 1.$
We prove   that, 
\begin{equation}\label{taubis}
1-\lambda(2-\lambda) \frac{x_{M_{\alpha}(p)}}{x_{M_{\alpha}(q)}}\neq 0, \quad \forall \, 
\lambda\in A(M_{\alpha}(p),M_{\alpha}(q))
\end{equation}
and 
\begin{equation}\label{bohhh}
     \tau(\lambda,M_{\alpha}(p),M_{\alpha}(q))=h(\lambda,p,q),
     \end{equation} 
where $h$ is defined in \eqref{hhh} and depends only on the second eigenvalue $m_2(\alpha,t)$ computed for $t=p$ and $t=q$, see \eqref{m_2}.
\par
\noindent
{\em Step 3.} We prove that the slope is maximum when 
$\lambda = \alpha_+\big(M_{\alpha}(p),M_{\alpha}(q)\big)$, namely, that for each $0\leq p\leq q\leq 1,\,p\neq 1$ one has 
\begin{equation}\label{slopeEoptimal}
  \max_{\lambda\in A(M_{\alpha}(p),M_{\alpha}(q))} 
h\left(\lambda,p,q\right)=
h\Big(\alpha_+\big(M_{\alpha}(p),M_{\alpha}(q)\big),p,q\Big).
\end{equation}
\noindent
This follows from Proposition \ref{propa} and Lemma \ref{83}, which imply that the slope on the right hand side of \eqref{slopeEoptimal} corresponds to the rank-one trajectory joining $M_{\alpha}(p)$ and $M_{\alpha}(q)$ that lies on $\gamma_{\alpha}$.
Notice that $x_{M_{\alpha}(q)}/y_{M_{\alpha}(q)}$ is positive, therefore \eqref{slopeEoptimal} is equivalent to maximizing the actual slope $\dot y/\dot x$. 

\section{Proving Steps 1-3}
\subsection{Step 1: proving \eqref{tau0}}

The next result gives a compact form for $\tau(\lambda,F,G)$.
Recall Definition  \ref{MT}.
\begin{proposition}\label{xy}
Let $F\in \mathcal D_{\rm d}$ and $G\in {\mathcal T^{1}(F)}$. 
Then \eqref{tau0} holds for each  $\lambda\in A(F,G)$.
\end{proposition}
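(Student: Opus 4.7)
The plan is to compute $\dot x(\lambda,0,F,G)$ and $\dot y(\lambda,0,F,G)$ explicitly in terms of $\lambda$ and the invariants $x_F,y_F,x_G,y_G$, and then form the ratio in \eqref{slopeE}. Since $i_2$ and $i_3$ are invariant under conjugation by rotations, by \eqref{oldbp} one has $x_G=i_2(R^tGR)$ and $y_G=i_3(R^tGR)$, so it suffices to work directly with
\begin{equation*}
M_{F,G}(\lambda,t)=\eta F+(1-\eta)\,n\otimes n,\qquad \eta=\eta(\lambda,t),
\end{equation*}
noting that $\tr M=\eta\tr F+(1-\eta)=1$ for every $\eta$.

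Next I would derive closed-form expressions for $x(\lambda,t)=i_2(M)$ and $y(\lambda,t)=i_3(M)$. The identity $2i_2(M)=1-\tr(M^2)$ combined with $\tr(M^2)=\eta^2\tr(F^2)+2\eta(1-\eta)\,P+(1-\eta)^2$ (where $P:=n^tFn$) and $\tr(F^2)=1-2x_F$ yields
\begin{equation*}
x(\lambda,t)=\eta(1-\eta)(1-P)+\eta^2 x_F.
\end{equation*}
For $y$, the matrix determinant lemma applied to the rank-one perturbation of the invertible matrix $\eta F$ gives
\begin{equation*}
y(\lambda,t)=\eta^3 y_F+\eta^2(1-\eta)\,Q,\qquad Q:=n^t(\mathrm{adj}\,F)n.
\end{equation*}
Evaluating both identities at $t=1$ (where $\eta=\lambda$) allows one to eliminate the auxiliary scalars $P$ and $Q$ in favour of the invariants of $G$:
\begin{equation*}
1-P=\frac{x_G-\lambda^2 x_F}{\lambda(1-\lambda)},\qquad Q=\frac{y_G-\lambda^3 y_F}{\lambda^2(1-\lambda)}.
\end{equation*}
These divisions are legitimate because $\lambda\in A(F,G)$ forces $\lambda\notin\{0,1\}$.

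Finally, I would differentiate in $t$ and evaluate at $t=0$, using $\eta(\lambda,0)=1$ and $\dot\eta(\lambda,0)=-(1-\lambda)/\lambda$. Direct computation gives $\dot x(\lambda,0)=\dot\eta\bigl[-(1-P)+2x_F\bigr]$ and $\dot y(\lambda,0)=\dot\eta\bigl[3y_F-Q\bigr]$; substituting the expressions for $1-P$ and $Q$ and simplifying produces
\begin{equation*}
\dot x(\lambda,0)=\frac{x_G-\lambda(2-\lambda)x_F}{\lambda^2},\qquad \dot y(\lambda,0)=\frac{y_G-\lambda^2(3-2\lambda)y_F}{\lambda^3}.
\end{equation*}
Taking the ratio and multiplying by $x_G/y_G$ as in \eqref{slopeE} yields \eqref{tau0} after cancellation of a factor $1/\lambda$. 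There is no genuine obstacle: the argument is a careful but routine algebraic manipulation; the only points that deserve attention are the appeal to the matrix determinant lemma (which requires $\det F\neq 0$, guaranteed by $F\in\mathcal D_d$) and verifying that every denominator produced in the intermediate simplifications is nonzero, which is ensured by $\lambda\in A(F,G)\setminus\{1\}$ and by the positivity of $F$ and $G$.
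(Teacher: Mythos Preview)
Your argument is correct and follows essentially the same route as the paper: derive closed polynomial-in-$\eta$ formulas for $i_2(M)$ and $i_3(M)$, eliminate the $n$-dependent constants by evaluating at the endpoint $t=1$ (where $\eta=\lambda$ and $M=R^tGR$), and then differentiate at $t=0$. The only cosmetic difference is that the paper obtains these formulas by expanding the characteristic polynomial of $M_{F,G}(\lambda,t)$ (which produces the sums $\sum n_i^2(f_j+f_k)$ and $\sum n_i^2 f_j f_k$ in place of your $1-P$ and $Q$), whereas you use the trace identity $2i_2=1-\tr(M^2)$ and the matrix determinant lemma; both computations yield exactly the same expressions \eqref{s92} and hence the same $\dot x(\lambda,0)$, $\dot y(\lambda,0)$.
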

\begin{proof}
We use \eqref{s92} of  Lemma \ref{usid} and for each $t\in [0,1]$ we compute 
 \begin{equation}\label{xdotydot}
        \begin{array}{ll}
\dot{x}(\lambda,t,F,G) &=     
\frac{\partial \eta}{\partial t}(\lambda,t)\,
         \left(\frac{x_G-\la^2x_F}{\la(1-\la)}-\frac{2(x_G-\la x_F)}{\la(1-\la)} \eta(\lambda,t)\right) 
        \\\\
        &=\frac{1}{\la^2}\bigl(2(x_G-\la x_F) \,\eta^3(\la,t)-
        (x_G-\la^2 x_F)\, \eta^2(\la,t)\bigr)
         \\\\
    \dot{y}(\lambda,t,F,G) & =
    \,\frac{\partial \eta}{\partial t}(\lambda,t)\,\left( \frac{2(y_G-\la^3y_F)}{\la^2(1-\la)}\,\eta(\lambda,t) -  \frac{3(y_G-\la^2y_F)}{\la^2(1-\la)}\eta^2(\lambda,t)\right)
    \\\\
&=\frac{1}{\la^3}\bigl(3(y_G-\la^2 y_F) \,\eta^4(\la,t)-
        2(y_G-\la^3 y_F)\, \eta^3(\la,t)\bigr).\\\\
        \end{array}  
    \end{equation}  
Thus
\begin{equation*}%
         \dot{x}(\lambda,0,F,G)=
         \frac{1}{\lambda^2}\bigl(x_G - \lambda(2-\lambda) x_F\bigr)
         ,\quad
          \dot y(\lambda,0,F,G) = \frac{1}{\lambda^3}\bigl(y_G - \lambda^2(3-2\lambda)y_F\bigr).
    \end{equation*}
In particular \eqref{tau0} holds when $\dot{x}(\lambda,0,F,G)\neq0$.
\end{proof}

\subsection{Step 2: proving \eqref{bohhh}}

Consider the family $t\mapsto M_\alpha(t)$ and
denote by
$\mathcal E_j(t)$   the  ordered eigenvalues of $M_\alpha(t)$ (see  \eqref{m1m2m3}, \eqref{m_2}), namely 
\begin{equation}\label{eigen}
\mathcal E_j(t):=m_j(\alpha,t).
\end{equation}
Define
\begin{equation}\label{6.15}
\begin{array}{l}
r(p,q):=\displaystyle{\frac{\E_2(p)}{\E_2(q)}\left(\frac{2u_\alpha - \E_2(p)}{2u_\alpha - \E_2(q)}
\right)}, \quad
            s(p,q):=\displaystyle{\frac{\E_2^2(p)}{\E_2^2(q)}\left(
            \frac{3u_\alpha - 2\E_2(p)}{3u_\alpha - 2 \E_2(q)}\right)}
            \end{array}
\end{equation}
\begin{equation}\label{hhh}
h(\lambda,p,q):=
    \frac{1-\lambda^2(3-2\lambda)\,s(p,q)}{\lambda\bigl(1-\lambda(2-\lambda)\,r(p,q)\bigr)}.
\end{equation}
The next result shows that  $r$, $s$ and $h$ are well-defined.
\begin{proposition}\label{lemmainvarianti}
 For each $0\leq p \leq q \leq 1,\,p\neq 1$ and $\lambda\in A(M_{\alpha}(p),M_{\alpha}(q))$, we have 
\begin{equation}\label{wd}
3u_\alpha -2 \E_2(q) >0, \quad
1-\lambda(2-\lambda)r(p,q) >0.
\end{equation}
Moreover, \eqref{taubis} and \eqref{bohhh} hold. 
\end{proposition}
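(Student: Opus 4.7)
The plan is to exploit the structure of $M_\alpha(t)=\eta\,S+(1-\eta)\,n_\alpha\otimes n_\alpha$ with $\eta=\eta(\alpha,t)$. Since $n_\alpha$ has vanishing second component, $e_2$ is a common eigenvector of $M_\alpha(t)$ for every $t\in[0,1]$ with eigenvalue $\E_2(t)=s_2\eta(\alpha,t)$, and the remaining two eigenvalues come from the $(e_1,e_3)$-block. Evaluating $b(s_2)=s_2(s_1-s_2)(s_3-s_2)<0$ gives $s_2\in(u_\alpha,u_\beta)$, hence $\alpha=u_\alpha/s_2<1$ and $\E_2$ is strictly decreasing on $[0,1]$, ranging over $[u_\alpha,s_2]$.

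The first inequality in \eqref{wd} follows from evaluating
\[
b\!\left(\tfrac{2 s_2}{3}\right)=\tfrac{2}{3}\,s_1 s_2 s_3>0.
\]
Since $s_2<\tfrac{1}{2}$ (from $2 s_2<s_2+s_3=1-s_1<1$), one has $\tfrac{2 s_2}{3}<\tfrac{1}{3}<u_\beta$. The upward-opening parabola $b$ is positive outside $[u_\alpha,u_\beta]$, which forces $\tfrac{2 s_2}{3}<u_\alpha$; hence $3 u_\alpha>2 s_2\geq 2\E_2(q)$ for every $q\in[0,1]$.

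The main step is to derive the two-factor representations
\[
i_2(M_\alpha(t))=C_2\,\E_2(t)\bigl(2 u_\alpha-\E_2(t)\bigr),\qquad i_3(M_\alpha(t))=C_3\,\E_2(t)^2\bigl(3 u_\alpha-2\E_2(t)\bigr),
\]
with $C_2,C_3>0$. The matrix-determinant lemma expresses the determinant of the $(e_1,e_3)$-block as $\eta^2 s_1 s_3+\eta(1-\eta)\,Q$ with $Q=s_3\cos^2\varphi_\alpha+s_1\sin^2\varphi_\alpha$, so that $i_3=\E_2\cdot\det(\text{block})$ and $i_2=\E_2(1-\E_2)+\det(\text{block})$ become explicit polynomials in $\eta=\E_2/s_2$. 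Matching coefficients to the proposed factorisations reduces to the single identity $s_2(1-2u_\alpha)(3 u_\alpha-2 s_2)=u_\alpha s_1 s_3$, which upon expansion is precisely $b(u_\alpha)=0$ and therefore holds by Definition~\ref{def_U}. Taking ratios at $t=p$ and $t=q$ then gives $x_F/x_G=r(p,q)$ and $y_F/y_G=s(p,q)$, and substituting into Proposition~\ref{xy} yields \eqref{bohhh}, subject to the denominator in \eqref{tau0} being nonzero.

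That nonvanishing, the second part of \eqref{wd} (strictly stronger than \eqref{taubis}), follows from the identity
\[
1-\lambda(2-\lambda)\,r(p,q)=\bigl(1-r(p,q)\bigr)+r(p,q)(1-\lambda)^2.
\]
The map $\nu\mapsto\nu(2u_\alpha-\nu)$ is strictly decreasing on $[u_\alpha,\infty)$ (recall $2u_\alpha>s_2\geq\E_2(t)$), so $\E_2(p)\geq\E_2(q)\geq u_\alpha$ gives $r(p,q)\leq 1$; with $r(p,q)>0$ and $\lambda\neq 1$, the right-hand side is strictly positive. I expect the principal obstacle to be the algebraic bookkeeping in the factorisation step: while the identities collapse conceptually to the quadratic relation $b(u_\alpha)=0$, the polynomial matching must be carried out with care.
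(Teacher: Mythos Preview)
Your proposal is correct and lands on the same key inequalities as the paper, but the route to the factorisation of $i_2(M_\alpha(t))$ and $i_3(M_\alpha(t))$ is genuinely different. The paper obtains \eqref{mainxy} as a corollary of the general identity \eqref{s92} in Lemma~\ref{usid}, which expresses $x(\lambda,t,F,G)$ and $y(\lambda,t,F,G)$ as explicit polynomials in $\eta(\lambda,t)$ for \emph{any} rank-one pair $(F,G)$, and then specialises to $(S,U_\alpha,\alpha)$; the reduction to $b(u_\alpha)=0$ appears there as \eqref{temp101}. You instead exploit the block structure of $M_\alpha(t)$ coming from $(n_\alpha)_2=0$ and compute the invariants directly via the $2\times 2$ determinant. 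Your approach is lighter for this specific curve, while the paper's Lemma~\ref{usid} is a reusable tool (it is also invoked in Proposition~\ref{xy}). One caveat: the ``single identity'' claim is slightly optimistic, since matching the $i_2$ and $i_3$ factorisations separately yields two linear conditions on $Q=s_3\cos^2\varphi_\alpha+s_1\sin^2\varphi_\alpha$; you should either check both collapse to $b(u_\alpha)=0$ after inserting the value of $Q$ from \eqref{mpd2}, or bypass $Q$ altogether by using the two endpoint values $i_j(S)$, $i_j(U_\alpha)$ to pin down the quadratic/cubic and then verify the root location --- this latter route is exactly equivalent to what the paper does in \eqref{temp100}--\eqref{temp101}.

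For the two inequalities in \eqref{wd} your arguments are essentially the paper's, repackaged: the paper uses $b'(\tfrac{2}{3}s_2)<0$ where you use $\tfrac{2}{3}s_2<\tfrac13<u_\beta$ together with the sign of $b$, and the paper's observation that $\lambda\mapsto 1-\lambda(2-\lambda)r$ is minimised at $\lambda=1$ with value $1-r=(\E_2(p)-\E_2(q))(\E_2(p)+\E_2(q)-2u_\alpha)/[\E_2(q)(2u_\alpha-\E_2(q))]>0$ is the same computation as your identity $(1-r)+r(1-\lambda)^2$ combined with the monotonicity of $\nu\mapsto\nu(2u_\alpha-\nu)$ on $[u_\alpha,\infty)$.
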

\begin{proof}
We show the first inequality in \eqref{wd}. Since $q\mapsto \E_2(q)\leq s_2$, it is enough to check that  $u_\alpha>\frac 2 3 s_2$. To this end, we use the function $b$ defined in \eqref{rootU}. We observe that
$b(\frac 2 3 s_2)=\frac{2 s_1 s_2 s_3}{3}>0$ and
$b'(\frac 2 3 s_2)=-2s_1 s_2 -3s_2^2 -3s_2 s_3<0$, thus $\frac 2 3 s_2<u_\alpha$, since $u_\alpha$ is the smallest root of the quadratic polinomial $b(x)$. 
\par
We proceed to check the second inequality in \eqref{wd}. We distinguish two cases. If $p=q$, then the statement is obvious. So we assume $p<q\leq1$. Since $\lambda\mapsto 1-\lambda(2-\lambda)\,r(p,q)$ is a quadratic polynomial in the variable  $\lambda$, it is minimised at $\lambda=1$. Hence, it suffices to show that   $1-\,r(p,q) >0$. 
By definition, this is the same as showing that
$$
(\E_2(p)-\E_2(q))(\E_2(p)+\E_2(q)-2u_\alpha)>0.$$ 
 Using the strict monotonicity of $\E_2$, we obtain 
$$
\E_2(p)-\E_2(q)> 0,\quad \E_2(p)+\E_2(q)-2u_\alpha  > 2\E_2(1)-2u_\alpha=0.
$$
Formula \eqref{taubis} follows from \eqref{wd} and \eqref{mainxyz} of Lemma \ref{usid}.
Finally, \eqref{bohhh} follows from \eqref{mainxy} of Lemma \ref{usid}. 
\end{proof}
 Set
 \begin{equation}\label{Lambda}
      \Lambda(p,q):=\frac{\E_2(q)}{\E_2(p)},
 \end{equation}
 and remark that $ \Lambda(p,q)\leq 1$ and  $\Lambda(p,q)= 1$ if and only if $p=q$.
The next lemma implies that $ h(\Lambda(p,q),p,q)$ represents  
the slope of the tangent vector at the point $M_{\alpha}(p)$ when running a trajectory from $M_{\alpha}(p)$ to $M_{\alpha}(q)$ on the restriction of $\gamma_{\alpha}$ to the arc joining $M_{\alpha}(p)$ and $M_{\alpha}(q)$. 
In addition, it gives an explicit formula for the extremes of 
the interval  $A_\alpha (M_{\alpha}(p),M_{\alpha}(q))$ defined in \eqref{A(F,G)}.  
\begin{Lemma}\label{83}
Let  $0\leq p \leq q \leq 1,\,p\neq 1$ and let $\Lambda(p,q)$ be defined in \eqref{Lambda}. There exist $R_p,R_q\in SO(3)$ such that 
\begin{equation}\label{crucial}
    \begin{array}{cc}
       R_q^t M_{\alpha}(q) R_q = \Lambda(p,q) R_p^t M_{\alpha}(p) R_p+(1-\Lambda(p,q))n_\alpha\otimes n_\alpha.
    \end{array}
\end{equation}   
\noindent 
In particular $\Lambda(p,q)\in A(M_{\alpha}(p),M_{\alpha}(q))$.
Moreover
\begin{equation}\label{ameno}
\alpha_-(M_{\alpha}(p),M_{\alpha}(q))=\max\left\{\frac{ {\mathcal E_1(q)}}{\E_2(p)},\frac{\E_2(q)}{\mathcal E_3(p)}\right\},
\quad
\alpha_+(M_{\alpha}(p),M_{\alpha}(q)) = \Lambda(p,q),
\end{equation}
and
\begin{equation}\label{974}
 q<1 \Longrightarrow  \alpha_-(M_{\alpha}(p),M_{\alpha}(q))<\alpha_+(M_{\alpha}(p),M_{\alpha}(q)).
\end{equation}
\end{Lemma}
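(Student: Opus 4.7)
The plan is to extract everything from a single explicit identity. By \eqref{s11}--\eqref{Gamma} we can write $M_\alpha(t) = \eta(\alpha,t)\,S + (1-\eta(\alpha,t))\, n_\alpha \otimes n_\alpha$. Setting $\eta_p := \eta(\alpha,p)$ and $\eta_q := \eta(\alpha,q)$, I will solve the relation at $t=p$ for $S$ and substitute into the relation at $t=q$; a short algebraic manipulation then produces
\begin{equation*}
M_\alpha(q) = \frac{\eta_q}{\eta_p}\, M_\alpha(p) + \Bigl(1-\frac{\eta_q}{\eta_p}\Bigr)\, n_\alpha \otimes n_\alpha,
\end{equation*}
and by \eqref{m_2} the prefactor equals $\Lambda(p,q) = \E_2(q)/\E_2(p)$. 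This establishes \eqref{crucial} with the trivial choice $R_p = R_q = I$. Conjugating both sides by the rotations that diagonalise $M_\alpha(p)$ and $M_\alpha(q)$ into their ordered-eigenvalue forms turns the identity into a relation of the form \eqref{oldbp} with parameter $\Lambda(p,q)$, and Lemma \ref{LemmaF} then yields $\Lambda(p,q) \in A(M_\alpha(p), M_\alpha(q))$.

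For the formulas for $\alpha_\pm$, the next observation I will exploit is that $p\le q$ forces $\eta_q \le \eta_p$ (since $\eta(\alpha,\cdot)$ is monotone decreasing for $\alpha<1$), so $\Lambda(p,q) \in (0,1]$ and
\begin{equation*}
M_\alpha(q) - \Lambda(p,q)\, M_\alpha(p) = (1-\Lambda(p,q))\, n_\alpha \otimes n_\alpha
\end{equation*}
is a positive semidefinite rank-one perturbation. The Cauchy interlacing theorem for rank-one updates will then yield
\begin{equation*}
\Lambda\,\E_1(p) \le \E_1(q) \le \Lambda\,\E_2(p) \le \E_2(q) \le \Lambda\,\E_3(p) \le \E_3(q),
\end{equation*}
where the central pair $\Lambda\,\E_2(p) = \E_2(q)$ is an equality by \eqref{m_2}. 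Reading off the outer inequalities $\Lambda\,\E_i(p) \le \E_i(q)$ gives $\E_i(q)/\E_i(p) \ge \Lambda$ for every $i$, with equality at $i=2$; hence $\alpha_+(M_\alpha(p),M_\alpha(q)) = \Lambda$. Reading off the inner inequalities $\E_1(q) \le \Lambda\,\E_2(p)$ and $\E_2(q) \le \Lambda\,\E_3(p)$ gives $\E_1(q)/\E_2(p), \E_2(q)/\E_3(p) \le \Lambda$, which combined with the defining formula in Definition \ref{A(F,G)} yields the stated expression for $\alpha_-$.

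Finally, for \eqref{974}, I will use that $M_\alpha(t)$ has distinct eigenvalues whenever $t<1$ (uniaxial collapse occurs only at $t=1$). If $q<1$ then $\E_1(q) < \E_2(q) = \Lambda\,\E_2(p)$, upgrading $\E_1(q)/\E_2(p) \le \Lambda$ to a strict inequality; and since $p<1$ by hypothesis, $\E_2(p) < \E_3(p)$ gives $\E_2(q) = \Lambda\,\E_2(p) < \Lambda\,\E_3(p)$, upgrading $\E_2(q)/\E_3(p) \le \Lambda$ to a strict inequality. Thus $\alpha_- < \Lambda = \alpha_+$. The only potentially delicate input in the whole plan is the identification of $s_2\,\eta_t$ as the middle ordered eigenvalue of $M_\alpha(t)$, but this is exactly what \eqref{m_2} records; everything else cascades from the rank-one identity via routine spectral inequalities.
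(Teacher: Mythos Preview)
Your argument is correct. The derivation of \eqref{crucial} by eliminating $S$ between the two instances of \eqref{Gamma} is exactly what the paper does; the paper phrases it with rotations $R_p,R_q$ in \eqref{pq}, but since $M_\alpha(t)$ is \emph{defined} as $\eta(\alpha,t)S+(1-\eta(\alpha,t))n_\alpha\otimes n_\alpha$, your observation that $R_p=R_q=I$ suffices is accurate, and the passage to the diagonal form via the diagonalising rotations is the right way to land in the framework of Definition~\ref{A(F,G)} and Lemma~\ref{LemmaF}.

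The genuine difference is in how you obtain $\alpha_+=\Lambda$. The paper argues by a sandwich: from \eqref{crucial} and Lemma~\ref{LemmaF} one gets $\Lambda\in A_\alpha$, hence $\Lambda\le\alpha_+$, while the definition of $\alpha_+$ immediately gives $\alpha_+\le\E_2(q)/\E_2(p)=\Lambda$. You instead invoke the Cauchy interlacing theorem for the rank-one perturbation $M_\alpha(q)=\Lambda M_\alpha(p)+(1-\Lambda)n_\alpha\otimes n_\alpha$ with $1-\Lambda\ge0$, which in one stroke yields the full chain $\Lambda\E_1(p)\le\E_1(q)\le\Lambda\E_2(p)=\E_2(q)\le\Lambda\E_3(p)\le\E_3(q)$. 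This is more economical: it simultaneously gives $\alpha_+=\Lambda$, the bound $\alpha_-\le\Lambda$, and sets up \eqref{974} directly, whereas the paper treats these separately. Your route also makes the appeal to Lemma~\ref{LemmaF} for the inclusion $\Lambda\in A$ redundant, since the interlacing already places $\Lambda$ in $[\alpha_-,\alpha_+]$. For \eqref{974} both proofs rely on the same fact---that the only uniaxial point on $\gamma_\alpha$ is $U_\alpha$---but you make the two strict inequalities $\E_1(q)/\E_2(p)<\Lambda$ and $\E_2(q)/\E_3(p)<\Lambda$ explicit rather than arguing by contradiction.
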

\begin{proof} 
By definition of $\Gamma_{\alpha}$, see \eqref{Gamma}, there exist two rotations  $R_p, R_q$   such that  
\begin{equation}\label{pq}
    \left\{\begin{array}{cc}
       R_p^t M_{\alpha}(p) R_p = \eta(p) S+(1-\eta(p))n_\alpha\otimes n_\alpha  \\\\
       R_q^t M_{\alpha}(q) R_q = \eta(q) S+(1-\eta(q))n_\alpha\otimes n_\alpha    ,
    \end{array}
    \right.
\end{equation}
where we set
 \begin{equation}\label{not1}
\eta(t):= \eta(\alpha,t),
    \end{equation}
    with $\eta(\alpha,t)$ defined in \eqref{s11}.
Then, by eliminating $S$ from \eqref{pq} and using \eqref{Lambda}, we deduce \eqref{crucial}.
The first equality in \eqref{ameno} holds by definition, since $p\neq 1$ implies that $M_\alpha(p)\in \mathcal D_d$, and hence Lemma \ref{LemmaF} can be applied.
We now prove the second equality in \eqref{ameno}. 
 Recall that by construction $\alpha_+\leq 1$. If $p=q$, the statement is immediate, since
$\alpha_+(M_{\alpha}(p),M_{\alpha}(p)):=\min_{j}\left\{\frac{\E_j(p)}{\E_j(p)}\right\}=1=\Lambda(p,p)$.
Now let $p<q$ and recall that in this case $\alpha_+(M_\alpha(p),M_\alpha(q))<1$. By the first part of the present lemma  $\Lambda(p,q)\in A_\alpha (M_\alpha(q),M_\alpha(p))$ and thus
$A_\alpha(M_\alpha(p),M_\alpha(q))\neq \emptyset$.
We have 
\begin{equation*}
     \alpha_+(M_\alpha(p),M_\alpha(q)):=\min_{j}\left\{\frac{\E_j(q)}{\E_j(p)}\right\}\leq \frac{\E_2(q)}{\E_2(p)}%
     =\Lambda(p,q)\leq\alpha_+(M_\alpha(p),M_\alpha(q)) \,.
\end{equation*}
To prove \eqref{974} recall
that in general $\alpha_-(M_\alpha(p),M_\alpha(q))\leq \alpha_+(M_\alpha(p),M_\alpha(q))$.
Equality occurs if and only if for some $p$ and $q$ one has
\begin{equation*} \hbox{either} \quad 
    \frac{\E_1(q)}{\E_2(p)}=\frac{\E_2(q)}{\E_2(p)}\quad
     \hbox{or } \quad
    \frac{\E_2(q)}{\E_3(p)}=\frac{\E_2(q)}{\E_2(p)}\,.
\end{equation*}
Hence, if equality occurs, $M_\alpha(p)$ or $M_\alpha(q)$ must be uniaxial. Since $M_\alpha(p)$ and $M_\alpha(q)$ belong to the optimal trajectory $\gamma_{\alpha}$, the only uniaxial point on $\gamma_{\alpha}$ is $U_{\alpha}$ which corresponds to  $q=1$.
\end{proof}
\subsection{Step 3: proving \eqref{slopeEoptimal}} 
 We state the main result of the present section.
\begin{proposition}\label{propa}
   If $M_{\alpha}(p),M_{\alpha}(q)\in\gamma_{\alpha}$, then for each $0\leq p \leq q \leq 1$, with $p\neq 1$,
 \begin{equation}\label{conj1a}
 \max_{\lambda \in A(M_{\alpha}(p),M_{\alpha}(q))} h(\lambda,p,q)= h(\Lambda(p,q),p,q).
 \end{equation}   
\end{proposition}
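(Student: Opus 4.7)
Writing $a := \E_2(p)$, $b := \E_2(q)$ and $u := u_\alpha$, so that $\Lambda(p,q) = b/a$, the strategy is to reduce the maximization to the sign of a rational function of $\lambda$.

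First, I would compute $h(\Lambda(p,q),p,q)$ in closed form. Substituting $\lambda = b/a$ into \eqref{hhh} and using the definitions \eqref{6.15}, the identities
\begin{equation*}
1 - \Lambda(2-\Lambda)\,r(p,q) = \frac{2(a-u)(a-b)}{a(2u-b)},\qquad 1 - \Lambda^2(3-2\Lambda)\,s(p,q) = \frac{6(a-u)(a-b)}{a(3u-2b)}
\end{equation*}
are verified by direct expansion, and after cancellation they give $h(\Lambda(p,q),p,q) = 3a(2u-b)/[b(3u-2b)]$. The quantities $2u-b$ and $3u-2b$ are both strictly positive (the latter by Proposition \ref{lemmainvarianti}, the former since $u > \tfrac{2}{3}s_2 \geq \tfrac{2}{3}b$).

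Next, I would form $h(\lambda,p,q) - h(\Lambda(p,q),p,q)$ over the common denominator $b(3u-2b)\,\lambda\,[1-\lambda(2-\lambda)r(p,q)]$, which is strictly positive on $A(M_\alpha(p),M_\alpha(q))$ by Proposition \ref{lemmainvarianti}. Substituting the values of $r(p,q)$ and $s(p,q)$, the resulting numerator simplifies to the cubic
\begin{equation*}
P(\lambda) = -\frac{a^3}{b}\lambda^3 + \frac{3a^2 u}{b}\lambda^2 - 3a(2u-b)\lambda + b(3u-2b).
\end{equation*}
By construction $P(b/a)=0$, and a short calculation (equivalently, verifying $P'(b/a) = 0$) shows that $b/a$ is in fact a \emph{double} root of $P$. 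Vieta's formulas then force the third root to be $(3u-2b)/a$, giving the factorization
\begin{equation*}
P(\lambda) = -\frac{1}{b}(a\lambda - b)^2\bigl(a\lambda - (3u-2b)\bigr).
\end{equation*}
Consequently, the sign of $h(\lambda,p,q) - h(\Lambda(p,q),p,q)$ is opposite to the sign of $a\lambda - (3u-2b)$, and \eqref{conj1a} reduces to the pointwise inequality $a\lambda \geq 3u - 2b$ on $A(M_\alpha(p), M_\alpha(q))$.

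On $A_\beta \subseteq (1,\infty)$ this is equivalent to $(3u-2b)/a \leq 1$, i.e.\ $a+2b \geq 3u$, which follows from $a\geq b\geq u$ together with the hypothesis $p\neq 1$ (ensuring $a > u$). On $A_\alpha = [\alpha_-, \Lambda(p,q)]$, it suffices to verify the bound at $\lambda = \alpha_-$; using $\alpha_-=\max\{\E_1(q)/a,\,b/\E_3(p)\}$ from Lemma \ref{83}, a sufficient condition is the scalar inequality $\E_1(q) + 2\E_2(q) \geq 3 u_\alpha$ for every $q\in[0,1]$. At $q=1$ this is an equality, since $\E_1(1)=\E_2(1)=u_\alpha$; at $q=0$ it reduces to $s_1+2s_2\geq 3u_\alpha$, which I would verify by substituting $y=(s_1+2s_2)/3$ into the quadratic $b$ of \eqref{rootU} and computing $b(y) = -\tfrac{1}{3}\,s_1(s_2-s_1)(s_3-s_2) < 0$, so that $y$ lies strictly between the two roots of $b$ and in particular $y > u_\alpha$. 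The hard part will be the interior range $q\in(0,1)$: using the unit trace, I would reformulate the inequality as $\E_3(q) - \E_2(q) \leq 1 - 3u_\alpha$ and establish it from the explicit quadratic (of trace $1 - s_2\eta(\alpha,q)$ and determinant depending explicitly on $\eta(\alpha,q)$ and $\varphi_\alpha$) satisfied by $\E_1(q)$ and $\E_3(q)$, exploiting that the second component of $n_\alpha$ vanishes so that $\E_2(q) = s_2\eta(\alpha,q)$. A monotonicity argument in $\eta(\alpha,q)$ should close the proof, though handling it cleanly is where the calculation is most delicate.
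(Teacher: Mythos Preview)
Your reduction is correct and tracks the paper's argument closely: both compute $h(\Lambda(p,q),p,q)$ in closed form, form the difference, discover the double root at $\lambda=\Lambda(p,q)$, and reduce everything to the linear inequality $\E_2(p)\,\lambda + 2\E_2(q) - 3u_\alpha \geq 0$ (your $a\lambda \geq 3u-2b$) on $A(M_\alpha(p),M_\alpha(q))$. You also correctly argue that this need only be checked at $\lambda=\alpha_-$ and that it then suffices to prove $K(q):=\E_1(q)+2\E_2(q)-3u_\alpha\geq 0$ for every $q\in[0,1]$, verifying the endpoints $q=0,1$ exactly as the paper does (including the sign computation $b((s_1+2s_2)/3)<0$).

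The one genuine gap is the interior range $q\in(0,1)$, which you leave as a hoped-for ``monotonicity argument'' via the explicit $2\times 2$ block. The paper closes it in one line with a concavity observation you are missing: $M_\alpha$ is affine in $\eta$, and the smallest eigenvalue of a symmetric matrix is a concave function of the matrix, so $\E_1$ is concave in $\eta$; since $\E_2=s_2\eta$, this makes $\E_1$ concave as a function of $\E_2$. Hence $k(\E_2):=\E_1+2\E_2-3u_\alpha$ is concave on $[u_\alpha,s_2]$ and is minimized at an endpoint, where you have already shown $k\geq 0$. Your equivalent reformulation $\E_3(q)-\E_2(q)\leq 1-3u_\alpha$ naturally points towards the convexity of $\E_3$, but convexity gives lower, not upper, bounds away from the endpoints; it is the concavity of $\E_1$ that makes the endpoint check sufficient and replaces the ``delicate'' calculation you anticipate with a one-sentence argument.
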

\begin{proof}
To ease notation we set, for $\lambda>0$,
 \begin{equation}
\begin{array}{cc}
     c(\lambda):= \lambda^2(3-2\lambda)  ,&
    d(\lambda):= \lambda(2-\lambda),
 \end{array}
     \end{equation}
and write \eqref{hhh} as
$$
h(\lambda,p,q)=\left(\frac 1 \lambda\right)
\frac{1- c(\lambda)\,s(p,q) }{1-d(\lambda)\,r(p,q)}.
$$ 
Formula \eqref{conj1a} is equivalent to
\begin{equation}\label{ine1}
h(\lambda,p,q)\leq \left(\frac{1} {\Lambda(p,q)}\right)
\frac{1- c(\Lambda(p,q))\,s(p,q) }{1-d(\Lambda(p,q))\,r(p,q)},
    \end{equation}
for each $0\leq p \leq q \leq 1$, with $p\neq 1$, and for each $\lambda\in A(M_{\alpha}(p), M_{\alpha}(q))$.    
We use formula \eqref{mainxyz} in Lemma \ref{usid} and the definition \eqref{Lambda} of $\Lambda(p,q)$ to write the right-hand side  of \eqref{ine1} as follows: 
    \begin{equation}\label{rhsine2}
    \begin{array}{cc}
        \displaystyle{\left(\frac{1}{\Lambda(p,q)}\right)
\frac{1- c(\Lambda(p,q))\,s(p,q) }{1-d(\Lambda(p,q))\,r(p,q)}  =
3\left(\frac{\E_2(p)}{\E_2(q)}\right)\frac{2u_\alpha-\E_2(q)}{3u_\alpha-2\E_2(q)}}.
    \end{array}
    \end{equation}
Motivated by the previous calculation, we compute the left-hand side of \eqref{ine1}, using the variable $X$, with \begin{equation}\label{X}
        X:= \lambda \frac{\E_2(p)}{\E_2(q)}=\frac{\lambda}{\Lambda(p,q)}.
    \end{equation}
    A long but straightforward algebraic manipulation yields 
\begin{equation} \label{hoggi}
h\left(\lambda,p,q\right)=h\left(\frac{\E_2(q)}{\E_2(p)}\,X,p,q\right)=
   \left(\frac{\E_2(p)}{\E_2(q)}\right)\left(\frac{ 2u_\alpha -\E_2(q)}{3u_\alpha -2\E_2(q)}\right) \frac{P(X)}{Q(X)},  
\end{equation}
with 
\begin{equation*}
\left\{
\begin{array}{ll}
   Q(X)=X\Bigl(\E_2(p)(2u_\alpha -\E_2(q)) - 2 X \E_2(p)(2u_\alpha - \E_2(p)) + X^2\E_2(q)(\E_2(p) -2 u_\alpha)\Bigr),\\\\
   P(X)=\E_2(p) ( 3 u_\alpha -2  \E_2(q) )+3X^2 \E_2(p)(2\E_2(p) -3 u_\alpha)+ 2 X^3 \E_2(q) (3u_\alpha-2  \E_2(p) ).
\end{array}  
\right.
\end{equation*}
From \eqref{rhsine2} and Proposition \ref{lemmainvarianti} we infer that $h(\Lambda(p,q),p,q)$ is positive. Therefore 
in order to prove \eqref{ine1} we only need to consider the case when $h(\lambda(p,q),p,q)$ is positive, which implies, by  
\eqref{hoggi}, that $\frac{P(X)}{Q(X)}$ is positive too.  
In order to proceed we need to show that $Q>0$. We prove, equivalently, that $P>0$. Using  the variable $\lambda$ we note that 
$P(X)>0$ if and only if 
 $$
 \tilde{p}(\lambda):=\E^2_2(p)  (3 u_\alpha-2 \E_2(p))(2\lambda^3 -3\lambda^2) + 
    \E_2^2(q) (2 \E_2(q) - 3 u_\alpha)>0.$$
It is easy to check that for $\lambda>0$, $\tilde{p}$   has a global minimum at $\lambda=1$ and that $\tilde{p}(1)>0$, for all admissible  $p,q$.
Next, we note that
 that \eqref{ine1} holds if and only if
 $$ \hbox{$3Q(X)-P(X)\geq 0$ for any $\lambda\in A(M_{\alpha}(p), M_{\alpha}(q))$}.$$
We compute $3Q(X)-P(X)$ and obtain
\begin{equation*}
      3Q(X)-P(X)= (1-X)^2 \E_2(p) (2 \E_2(q) +X \E_2(q)-3u_\alpha ),
\end{equation*}
which shows that $X=1$ is in fact a double root and not merely a simple root.
Therefore  $3Q(X)-P(X)\geq 0 $ if and only if for each $0\,\leq q\leq p\leq 1,\, p\neq1$ and
$\lambda\in A(M_{\alpha}(p), M_{\alpha}(q))$
\begin{equation}\label{nonnegativa}
\begin{array}{l}
2 \E_2(q)+  \lambda\E_2(p)-3 u_\alpha\geq 0.
    \end{array}
\end{equation}
We now recall that $\min\{\lambda\in A(M_{\alpha}(p), M_{\alpha}(q))\}=\alpha_-(M_{\alpha}(p), M_{\alpha}(q))$, hence \eqref{nonnegativa} holds if and only if for each $0\,\leq q\leq p\leq 1,\, p\neq1$
\begin{equation*}     
2 \E_2(q)+ \alpha_-(M_{\alpha}(p),M_{\alpha}(q)) \E_2(p)-3u_\alpha\geq 0,
\end{equation*}
which  is in turn equivalent to
\begin{equation}
\label{minj>0}
       \alpha_-(M_{\alpha}(p),M_{\alpha}(q))+\frac{2\E_2(q)}{\E_2(p)}-\frac{3u_\alpha}{\E_2(p)}\geq 0.
\end{equation}
We have thus proved that \eqref{ine1}  is equivalent to \eqref{minj>0}, which we proceed to prove.  
For $p\neq 1$, the matrix $M_\alpha(p)$ has distinct eigenvalues, hence Lemma \ref{LemmaF} applies to the pair $(M_\alpha(p),M_\alpha(q))$.
By the definition of $\alpha_-(M_{\alpha}(p),M_{\alpha}(q))$, (see \eqref{ameno}) we need to prove that
\begin{equation*}%
\max\left\{\frac{\E_1(q)}{\E_2(p)}+\frac{2\E_2(q)}{\E_2(p)}- \frac{3u_{\alpha}}{\E_2(p)},\frac{\E_2(q)}{\E_3(p)}+\frac{2\E_2(q)}{\E_2(p)}- \frac{3u_{\alpha}}{\E_2(p)}\right\}\geq0.
\end{equation*}
It suffices to prove that the leftmost expression above is nonnegative, namely that 
\begin{equation}\label{ccomb}
    \begin{array}{ll}
K(q):=\E_1(q) + 2\E_2(q)-3 u_{\alpha}\geq 0,\, \forall q\in[0,1],
     \end{array}
\end{equation}
since $\E_2(p)$ is always strictly positive.
We now regard $K$ as $K(q)=k(\E_2(q))$ with
$k(t)=\E_1(t) + 2 t -3 u_\alpha$. It is well known  that
 the function $t\in[0,1]\to \E_1(t)$  is concave, hence, so is  $k$. Therefore $k$
 is minimised at the boundary of its domain of definition. Since the function $\E_2$ varies monotonically in $[u_{\alpha},s_2]$ as $q$ ranges in $[0,1]$, we find
$$\min_{q\in[0,1]}K(q)=\min_{\E_2\in[u_{\alpha},s_2]}k(\E_2)=\min\{k(u_{\alpha}), k(s_2)\}.$$
We now verify that $\min\{k(u_{\alpha}), k(s_2)\}=0$.
Note that
 $$ \min\{k(u_{\alpha}), k(s_2)\}=\min\{u_{\alpha}+2u_{\alpha}-3u_{\alpha}, s_1+2s_2-3u_{\alpha}\}=\min\{0,s_1+2s_2-3u_{\alpha}\}.$$
It remains to show that  $s_1+2s_2-3u_{\alpha}\geq 0$. Recalling \eqref{rootU}, one checks that
$$b\left(\frac{s_1+2 s_2}{3}\right)=\frac{s_1(s_2-s_1)(s_2-s_3)}{3}<0.$$ Hence, the value $\frac{s_1+2s_2}{3}$  lies between the two roots of $b=0$. 
In particular, it is greater than the smaller root, namely $u_{\alpha}$, which is positive.  Thus
$\displaystyle{\min_{\E_2\in[u_{\alpha},s_2]}k(\E_2)=0}.$ Hence \eqref{ine1} holds.

\end{proof}

\section{Computation of the invariants}
We now establish the technical lemma used in the proof of Proposition \ref{xy} and Proposition \ref{lemmainvarianti}.
  Recall \eqref{s11}, \eqref{xF}, \eqref{6.15} and define 
\begin{equation}
\label{i2i3}
x(t):=x(\alpha,t,S,U_\alpha)=i_2(M_{\alpha}(t)),\quad y(t):=y(\alpha,t,S,U_\alpha)=i_3(M_{\alpha}(t)).
\end{equation}

\begin{Lemma}\label{usid}
Under the assumptions of Proposition \ref{xy},  we have $\lambda\notin \{0,1\}$ and, for each $t\in[0,1]$, one has
\begin{equation}\label{s92}
\left\{
\begin{array}{ll}
x(\lambda, t, F,G)=\displaystyle{
\frac{x_G-\lambda^2 x_F}{\lambda(1-\lambda)}\,\eta(\lambda,t)-\frac{x_G-\lambda x_F}{\lambda(1-\lambda)}\eta^2(\lambda,t)\,}\,\\\\
y(\lambda,t,F,G)=\displaystyle{\frac{y_G-\lambda^3 y_F}{\lambda^2(1-\lambda)}\,\eta^2(\lambda,t)
-\frac{y_G-\lambda^2 y_F}{\lambda^2(1-\lambda)}\,\eta^3(\lambda,t).}
\end{array}
\right.
\end{equation}
In particular, 

\begin{equation}\label{mainxy}
\begin{array}{lll}
\left\{\begin{array}{l}
x(t)
=\displaystyle{
 \frac{2-3u_\alpha}{u_\alpha
 } \left(2u_\alpha -\mathcal E_2(t)
\right)}\E_2(t)
\\\\
y(t)
=\displaystyle{
\frac{1-2 u_\alpha}{u_\alpha}\left(
3u_\alpha\, -2 \mathcal E_2(t)\right)\mathcal E_2^2(t)\,} ,
\end{array}
\right.
\end{array}
\end{equation}
and, for any  $0\leq p\leq q\leq 1$, with $p\neq 1$, we have  
\begin{equation}\label{mainxyz}
\begin{array}{cc}
\displaystyle{\frac{x(p)}{x(q)}=r(p,q)},\quad&
\displaystyle{\frac{y(p)}{y(q)}=s(p,q)}.
\end{array} 
\end{equation}
\end{Lemma}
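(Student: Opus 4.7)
The plan is first to derive \eqref{s92} by a direct computation on the rank-one form $M_{F,G}(\lambda,t)=\eta F+(1-\eta)n\otimes n$, exploiting that its invariants $i_2$ and $i_3$ are polynomials in $\eta$ whose two scalar coefficients can be read off from the boundary values at $t=0$ (where $\eta=1$) and $t=1$ (where $\eta=\lambda$). The preliminary claim $\lambda\notin\{0,1\}$ is immediate: $\lambda\neq 1$ is built into the definition of $A(F,G)$ via \eqref{oldbp}, whereas $\lambda=0$ would force $R^tGR=n\otimes n$, contradicting the positive definiteness of $G\in\mathcal D$.

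To obtain \eqref{s92} I use $\mathrm{tr}\,M=1$ together with $i_2(M)=\tfrac12(1-\mathrm{tr}(M^2))$, which yields
\[
x(\lambda,t,F,G)=\eta^2 x_F+\eta(1-\eta)\bigl(1-\langle n,Fn\rangle\bigr).
\]
Evaluating at $t=1$ determines $\langle n,Fn\rangle$ in terms of $x_F,x_G,\lambda$, and substituting back produces the claimed expression for $x$. For $y=\det M$, the matrix determinant lemma gives
\[
y(\lambda,t,F,G)=\eta^3 y_F+\eta^2(1-\eta)\,y_F\,\langle n,F^{-1}n\rangle,
\]
and the analogous elimination at $t=1$ yields the stated formula for $y$. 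Differentiation in $t$ then produces \eqref{xdotydot} as used in the proof of Proposition \ref{xy}.

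For \eqref{mainxy} I would specialise to $F=S$, $G=U_\alpha$, $\lambda=u_\alpha/s_2$, and use the identity $\eta(\alpha,t)=\E_2(t)/s_2$ from \eqref{m_2} to regard $x(t),y(t)$ as polynomials in the single variable $\E_2$: by inspection of \eqref{s92}, $x$ is quadratic vanishing at $\E_2=0$, while $y$ is cubic with a double zero at $\E_2=0$. The essential additional input is that $x$ vanishes also at $\E_2=2u_\alpha$ (equivalently $\eta=2\lambda$) and $y$ vanishes also at $\E_2=\tfrac{3u_\alpha}{2}$ (equivalently $\eta=\tfrac32\lambda$); substituting these values into \eqref{s92} together with the explicit data $x_{U_\alpha}=u_\alpha(2-3u_\alpha)$, $y_{U_\alpha}=u_\alpha^2(1-2u_\alpha)$ reduces in both cases to the single polynomial identity
\[
6s_2 u_\alpha^2+u_\alpha(s_1s_3-3s_2-4s_2^2)+2s_2^2=0,
\]
which is precisely $b(u_\alpha)=0$ from \eqref{rootU}. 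Recognising that the two seemingly independent vanishing conditions collapse to this same defining equation for $u_\alpha$ is the main algebraic hurdle; once established, the factorisations $x(t)=c_1\E_2(t)\bigl(2u_\alpha-\E_2(t)\bigr)$ and $y(t)=c_2\E_2^2(t)\bigl(3u_\alpha-2\E_2(t)\bigr)$ follow, with $c_1,c_2$ fixed by one evaluation at $\E_2=u_\alpha$, yielding exactly \eqref{mainxy}.

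Finally, \eqref{mainxyz} is a direct termwise ratio of \eqref{mainxy}: the definitions of $r(p,q)$ and $s(p,q)$ in \eqref{6.15} match $x(p)/x(q)$ and $y(p)/y(q)$ factor by factor, so no further work is required.
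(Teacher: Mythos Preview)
Your proposal is correct and follows the same overall strategy as the paper: write $i_2$ and $i_3$ of $M_{F,G}(\lambda,t)$ as polynomials in $\eta$ whose non-trivial coefficients are $t$-independent, read those coefficients off at $t=1$ to obtain \eqref{s92}, and then for \eqref{mainxy} reduce the two resulting algebraic conditions to the single defining equation $b(u_\alpha)=0$ of \eqref{rootU}.

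The execution differs only in packaging. The paper expands the characteristic polynomial $\det(M-zI)$ via a rank-one perturbation formula and isolates the sums $\sum_i n_i^2(f_j+f_k)$ and $\sum_i n_i^2 f_jf_k$; you instead use $i_2=\tfrac12(1-\mathrm{tr}\,M^2)$ and the matrix determinant lemma to isolate $1-\langle n,Fn\rangle$ and $y_F\langle n,F^{-1}n\rangle$. These are literally the same quantities (since $\mathrm{tr}\,F=1$ gives $1-\langle n,Fn\rangle=\sum_i n_i^2(f_j+f_k)$, and $y_F\langle n,F^{-1}n\rangle=\sum_i n_i^2 f_jf_k$), so the two derivations of \eqref{s92} are interchangeable, with yours arguably more direct. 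For \eqref{mainxy} the paper verifies the two linear coefficients in \eqref{temp100} separately, while you phrase this as locating the nonzero root of $x$ at $\eta=2\alpha$ and of $y$ at $\eta=\tfrac32\alpha$; again these are equivalent reformulations, and in both cases the key observation---that the two conditions collapse to the single identity $b(u_\alpha)=0$---is the same.
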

\begin{proof}
We first prove \eqref{s92}. 
For $(\xi_1,\xi_2,\xi_3)\in \R^3$, with $\xi_1\notin\{0,1\}$, define
\begin{equation*}
\begin{array}{lll}
\displaystyle{
w_1(\xi_1,\xi_2,\xi_3):=
\frac{\xi_3- \xi_1 \xi_2}{1-\xi_1
}
},
&
\displaystyle{
w_2(\xi_1,\xi_2,\xi_3):=
\frac{\xi_3- \xi_1^2 \xi_2}{\xi_1(1-\xi_1)
}
}
,&
\displaystyle{
w_3(\xi_1,\xi_2,\xi_3):=
\frac{\xi_3- \xi_1^3 \xi_2}{\xi_1^2(1-\xi_1)
}
}.
\end{array}
\end{equation*}
We begin by showing that the functions $w_i$ are constant along the curves defined by the three homogeneous symmetric invariants $i_j$. 
In other words,
for each $t\in [0,1]$, we prove that
\begin{equation*}
    \left\{\begin{array}{l}
\displaystyle{
w_1(\eta(\lambda,t),i_1(F), i_1(G))=w_1(\lambda,i_1(F), i_1(G))
},
\\\\
\displaystyle{
w_2(\eta(\lambda,t),x_F,x_G)=w_2(\lambda,x_F,x_G)
},\quad
\displaystyle{
w_3(\eta(\lambda,t),y_F,y_G)=w_3(\lambda,y_F,y_G)
}.
\end{array}
\right.
\end{equation*}
Indeed, denoting by $f_j$ the eigenvalues of $F$,  we have
\begin{equation}\label{cpnew}
\begin{array}{cc}
\det (M_{F,G}(\lambda,t) -zI)\\
\\
=\displaystyle{
\det(\eta(\lambda,t) F - z I)+(1-\eta(\lambda,t))\sum_{i=1}^{3} n_i^2(\eta(\lambda,t) f_j - z)(\eta(\lambda,t) f_k - z).
}
\end{array}\end{equation}
The last sum is taken with the convention that $i\neq j\neq k\neq i$.
As a function of the complex variable $z$, the equation \eqref{cpnew} says that a certain polynomial of degree three vanishes identically. Therefore, the coefficient of the polynomial on the left-hand side must be equal to those on the right-hand side. The coefficient of degree three is equal to  -1 for both polynomials. We write the other three equalities.
We have 
\begin{equation}\label{coeff0}
    \begin{array}{ll}
    i_1(M(\lambda,t)) =\eta(\lambda,t) i_1(F)+(1- \eta(\lambda,t))[n_1^2+n_2^2+n_3^2]\\\\
  i_2(M(\lambda,t)) =    \eta^2(\lambda,t) x_F+ \eta(\lambda,t)(1- \eta(\lambda,t))[n_1^2(f_2+f_3)+n_2^2(f_3+f_1)+n_3^2(f_1+f_2)] \\\\
   i_3(M(\lambda,t))=    \eta^3(\lambda,t) y_F+ \eta^2(\lambda,t)(1- \eta(\lambda,t))   [n_1^2f_2f_3+n_2^2f_3f_1+n_3^2f_1f_2].
    \end{array}
\end{equation}
Isolating the terms depending explicitly on the $n_i$'s one can see that the functions $w_j$ are independent of $t$. Therefore evaluating $w_j$ at $t=0$ we get 
\begin{equation}\label{coeff}
\left\{
    \begin{array}{lll}        
    n_1^2+n_2^2+n_3^2=
    \displaystyle{
\frac{i_1(G)-\lambda i_1(F)}{1- \lambda}}, \\
n_1^2(f_2+f_3)+n_2^2(f_3+f_1)+n_3^2(f_1+f_2)=\displaystyle{\frac{x_G-\lambda^2 x_F}{\lambda(1- \lambda)}},   \\
  n_1^2f_2f_3+n_2^2f_3f_1+n_3^2f_1f_2=\displaystyle{
      \frac{y_G-\lambda^3 y_F}{\lambda^2(1- \lambda)}}.
\end{array}
    \right.
\end{equation}
Note that with our assumptions on $F$ and $G$ one has $\lambda \notin\{0,1\}$, so the formulas in \eqref{coeff} make sense. Moreover, our assumptions imply that $i_1(M(\lambda,t))=i_1(F)=1$, thus the first equation is automatically satisfied since $n$ has norm one.
Plugging \eqref{coeff0} into \eqref{coeff} we obtain 
\begin{equation*}
x(\lambda, t, F,G)= \eta^2(\lambda,t) x_F+ \eta(\lambda,t)\bigl(1-\eta(\lambda,t)\bigr)w_2(\lambda,x_F,x_G) 
\end{equation*} 
which yields the first equation in \eqref{s92}. 
The derivation of the second one is similar and is thus omitted.
\par

It remains to show \eqref{mainxy} and \eqref{mainxyz}. The latter follows from the former by very elementary algebraic manipulations which we omit. 
We focus on the first formula of \eqref{mainxy}.
Specialising \eqref{s92} to $F=S$, $G=U_\alpha$, $\lambda=\alpha$ and recalling that $\E_2(t) = s_2\eta(t)$, 
we find 
\begin{equation}
    \begin{array}{c}
    \displaystyle{
    x(\alpha,t,S,U_\alpha)
    =
\frac{x_{U_\alpha}-\alpha^2 x_S}{s_2\alpha(1-\alpha)}\,\E_2(t)-\frac{x_{U_\alpha}-\alpha x_S}{s_2^2\alpha(1-\alpha)}\E_2^2(t)}.
    \end{array}
\end{equation}
Therefore, it is enough to prove that
\begin{equation}\label{temp100}
    \displaystyle{
   \frac{x_{U_\alpha}-\alpha^2 x_S}{s_2\alpha(1-\alpha)}=2(2-3u_\alpha), 
\quad 
\frac{x_{U_\alpha}-\alpha x_S}{s_2^2\alpha(1-\alpha)}=\frac{2 -3u_\alpha}{u_\alpha}.
   }
\end{equation}
It is easy to check that the two equalities  in \eqref{temp100} are equivalent to following equation
\begin{equation}\label{temp101}
   6s_2 u^2_\alpha+(x_S-4s_2-3s_2^2)u_\alpha+2s_2^2=0.
\end{equation}
We now show that \eqref{temp101} is equivalent to $b(x)=0$, with $b$ defined in  \eqref{rootU}, which holds by construction.
Indeed,
$$
x_S-4s_2-3s_2^2=s_1s_3+s_2-s_2^2-4s_2-3s_2^2=s_1s_3-3s_2-4s_2^2.
$$
The proof of the second equation in \eqref{mainxy} is similar and is omitted.
\end{proof}
\section{The $\gamma_{\beta}$-case and the proof of Lemma \ref{davveroultimo}}\label{final7} 

\subsection{The $\gamma_{\beta}$-case}
We will not treat the present case in detail, since the arguments are parallel to those used for the $\gamma_\alpha$-case.
Recall Definitions \ref{def_U} and \ref{defgamma} and  denote the ordered eigenvalues of $M_{\beta}(t)$ by $\F_1(t)\leq \F_2(t)\leq\F_3(t)$.
Set  
  $$
   \Lambda_\beta(p,q):=
    \frac{\F_2(q)}{\F_2(p)}, \qquad r_\beta(p,q):=\frac{\F_2(p)}{\F_2(q)}\left(\frac{2u_\beta - \F_2(p)}{2u_\beta - \F_2(q)}\right). 
   $$
The following result is the analog of \eqref{crucial} and  of Proposition \ref{propa}.
   
\begin{proposition}\label{propb}
  Let  $0\leq p \leq q \leq 1$, $p\neq 1$. 
  There exist $R_p, R_q \in SO(3)$ such that 
 \begin{equation*}
    \begin{array}{cc}
       R_q^t M_{\beta}(q) R_q = \Lambda_\beta(p,q) R_p^t M_{\beta}(p) R_p+(1-\Lambda_\beta(p,q))n_\beta\otimes n_\beta,
    \end{array}
\end{equation*}   
 and
 \begin{equation}\label{conj1b}
 \min_{\lambda \in A(M_{\beta}(p),M_{\beta}(q))} h(\lambda,p,q)= h(\Lambda_\beta(p,q),p,q).
 \end{equation}   
\end{proposition}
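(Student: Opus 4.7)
The plan is to mirror Lemma \ref{83} and Proposition \ref{propa} throughout, flipping signs to account for the fact that along $\Gamma_\beta$ the second eigenvalue increases from $s_2$ to $u_\beta$ (so $\Lambda_\beta(p,q) \geq 1$ lies in the $A_\beta$-part of $A$) and that \eqref{conj1b} calls for a minimum of $h$ rather than a maximum. For the rank-one connection, by definition of $\Gamma_\beta$ (see \eqref{Gamma}) there exist $R_p, R_q \in SO(3)$ with
\begin{align*}
R_p^t M_\beta(p) R_p &= \eta(\beta,p)\, S + (1-\eta(\beta,p))\, n_\beta \otimes n_\beta, \\
R_q^t M_\beta(q) R_q &= \eta(\beta,q)\, S + (1-\eta(\beta,q))\, n_\beta \otimes n_\beta.
\end{align*}
Eliminating $S$ between these two identities produces the claimed rank-one equality with coefficient $\eta(\beta,q)/\eta(\beta,p)$, which by \eqref{m_2} equals $\F_2(q)/\F_2(p) = \Lambda_\beta(p,q)$; in particular $\Lambda_\beta(p,q) \in A(M_\beta(p), M_\beta(q))$.

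For the minimization, I would re-run the algebra of Proposition \ref{propa} with $\E_2 \to \F_2$ and $u_\alpha \to u_\beta$. The $\beta$-analog of Lemma \ref{usid} yields $x(\beta,t,S,U_\beta) = \tfrac{2-3u_\beta}{u_\beta}(2u_\beta - \F_2(t))\F_2(t)$ and the analogous expression for $y$, since $u_\beta$ is also a root of $b$ in \eqref{rootU}. Setting $X := \lambda/\Lambda_\beta(p,q)$, the identity \eqref{hoggi} carries over as
\begin{align*}
h(\lambda, p, q) &= \frac{\F_2(p)}{\F_2(q)}\cdot\frac{2u_\beta - \F_2(q)}{3u_\beta - 2\F_2(q)}\cdot\frac{P_\beta(X)}{Q_\beta(X)}, \\
h(\Lambda_\beta(p,q), p, q) &= 3\cdot\frac{\F_2(p)}{\F_2(q)}\cdot\frac{2u_\beta - \F_2(q)}{3u_\beta - 2\F_2(q)},
\end{align*}
where $P_\beta, Q_\beta$ are the obvious replacements of $P, Q$ and the prefactor is positive (using $\F_2(q) \leq u_\beta < 1/2$, the latter because $U_\beta$ is positive definite). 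Thus $h(\lambda, p, q) \geq h(\Lambda_\beta(p,q), p, q)$ reduces, via the factorization
\[3Q_\beta(X) - P_\beta(X) = (1-X)^2\,\F_2(p)\bigl(2\F_2(q) + X\F_2(q) - 3u_\beta\bigr),\]
to the pointwise bound $2\F_2(q) + \lambda \F_2(p) \leq 3u_\beta$ for every $\lambda \in A(M_\beta(p), M_\beta(q))$.

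Since the left-hand side is increasing in $\lambda$, it is enough to test $\lambda = \beta_+(M_\beta(p), M_\beta(q))$; Definition \ref{A(F,G)} gives $\beta_+ \F_2(p) \leq \F_3(q)$, so the task reduces to showing $K_\beta(q) := \F_3(q) + 2\F_2(q) - 3u_\beta \leq 0$ on $[0,1]$. The endpoints are direct: $K_\beta(1) = 0$ from the eigenvalues $\{1-2u_\beta, u_\beta, u_\beta\}$ of $U_\beta$, while $K_\beta(0) = s_3 + 2s_2 - 3u_\beta \leq 0$ via the explicit expansion
\[b\Bigl(\frac{s_3 + 2s_2}{3}\Bigr) = \frac{s_3(s_2-s_3)(s_2-s_1)}{3} < 0,\]
which places $(s_3+2s_2)/3$ strictly between the two roots of $b$ and therefore below the larger root $u_\beta$. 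The main technical obstacle will be extending $K_\beta \leq 0$ from the endpoints to all $q \in [0,1]$ by a convexity argument, mirroring the concavity of $k(t) = \E_1(t) + 2t - 3u_\alpha$ used at the end of the proof of Proposition \ref{propa}: here one would need $\F_3$ to be a convex function of $\F_2$ along $M_\beta$, the natural mirror of the concavity of $\E_1$ along $M_\alpha$ invoked in the $\alpha$-case, with the roles of smallest and largest eigenvalue swapped; this should follow from explicit formulas for $\F_3$ analogous to \eqref{mainxy}, but warrants a careful check.
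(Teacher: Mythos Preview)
Your proposal is correct and follows exactly the approach the paper intends: the paper explicitly omits the proof of Proposition~\ref{propb}, stating only that the arguments are ``very similar'' to those of Proposition~\ref{propa}, and your sketch fills in precisely those parallel steps (including the identification $\Lambda_\beta=\beta_-$ recorded in Lemma~\ref{c-lemmab}). The one point you flag as the ``main technical obstacle'' is in fact immediate and is the exact mirror of the fact invoked in the $\alpha$-case: since the second component of $n_\beta$ vanishes, $\F_2(t)=s_2\,\eta(\beta,t)$ is an eigenvalue of $M_\beta(t)$ and the remaining two eigenvalues $\F_1,\F_3$ come from a $2\times 2$ symmetric block that is affine in $\eta$; the largest eigenvalue of such a family is convex in the parameter (being a pointwise maximum of affine functions), so $\F_3$ is convex as a function of $\F_2=s_2\eta$, whence $k_\beta(\F_2)=\F_3+2\F_2-3u_\beta$ is convex and is maximised at the endpoints you already checked.
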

The proof of Proposition \ref{propb} is based on arguments that are very similar to those used to prove Proposition \ref{propa} and is therefore omitted. We only point out that the minimisation of 
$h$ in \eqref{conj1b} is justified by Lemma \ref{davveroultimo} (see Remark \ref{remutile}).

For completeness, in the next lemma we summarise the 
analog of the results of Proposition \ref{lemmainvarianti} and Lemma \ref{83}.
\begin{Lemma}\label{c-lemmab}
 For each $0\leq p \leq q\leq 1$, $p\neq 1$, and $\lambda\in A(M_{\beta}(p),M_{\beta}(q))$, we have 
 \begin{equation}\label{come4.14}
3u_\beta -2 \F_2(q) >0, \quad
1-\lambda(2-\lambda)r_\beta(p,q) >0,
\end{equation}
 \begin{equation}\label{amenob}
 \beta_-(M_\beta(p),M_\beta(q))=\Lambda_\beta(p,q),\quad
 {\displaystyle  \beta_+(M_\beta(p),M_\beta(q))=
 \max\left\{\frac{\F_2(q)}{\F_1(p)},\frac{ {\F_3(q)}}{\F_2(p)}\right\}},
 \end{equation}
and
\begin{equation}\label{973}
 q<1 \Longrightarrow  \beta_-(M_\beta(p),M_\beta(q))<\beta_+(M_\beta(p),M_\beta(q)).
\end{equation}
\end{Lemma}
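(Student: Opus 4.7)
The plan is to mirror the proofs of Proposition~\ref{lemmainvarianti} and Lemma~\ref{83}, with the sign adjustments that come from the fact that in the $\beta$-case the second eigenvalue is monotonically \emph{increasing} rather than decreasing along the trajectory. The key preliminary observation is that $u_\beta>s_2$. This can be read off Vieta's formula applied to the polynomial $b$ in \eqref{rootU}: the product of its roots equals $s_2/3$, so $u_\alpha u_\beta=s_2/3$, and combining with the strict inequality $u_\alpha<1/3$ from Proposition~\ref{prop_opt_curve}(i) yields $u_\beta>s_2$. Since $\F_2(t)=s_2\eta(\beta,t)$, this tells us that $\F_2$ strictly increases from $s_2$ to $u_\beta$ as $t$ runs over $[0,1]$, so in particular $\F_2(q)\le u_\beta$ for every $q\in[0,1]$. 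The first inequality in \eqref{come4.14} is then immediate from $3u_\beta>2u_\beta\ge 2\F_2(q)$.

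For the second inequality I would follow the template of Proposition~\ref{lemmainvarianti}. The denominator factors $2u_\beta-\F_2(p)$ and $2u_\beta-\F_2(q)$ are positive (as $\F_2\le u_\beta<2u_\beta$), so $r_\beta(p,q)>0$, and the quadratic $\lambda\mapsto 1-\lambda(2-\lambda)r_\beta(p,q)$ is minimised at $\lambda=1$ with value $1-r_\beta(p,q)$. When $p=q$ one has $r_\beta=1$ and the expression reduces to $(\lambda-1)^2$, which is strictly positive because $1\notin A(M_\beta(p),M_\beta(q))$. When $p<q$, I would reduce $r_\beta(p,q)<1$ to the inequality $(\F_2(p)-\F_2(q))(\F_2(p)+\F_2(q)-2u_\beta)>0$; both factors are strictly negative -- the first by strict monotonicity, the second because $\F_2(p)<u_\beta$ (using $p\ne 1$) together with $\F_2(q)\le u_\beta$ -- so their product is positive.

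To prove \eqref{amenob}, I would first obtain the rank-one connection between $R_q^t M_\beta(q) R_q$ and $R_p^t M_\beta(p) R_p$ by eliminating $S$ from the two identities $R_p^t M_\beta(p)R_p=\eta(\beta,p)S+(1-\eta(\beta,p))n_\beta\otimes n_\beta$ and its counterpart at $q$, obtaining the stated expression with coefficient $\Lambda_\beta(p,q)=\eta(\beta,q)/\eta(\beta,p)=\F_2(q)/\F_2(p)$. Lemma~\ref{LemmaF} then places $\Lambda_\beta(p,q)$ in $A(M_\beta(p),M_\beta(q))$; since $\F_2$ is increasing, $\Lambda_\beta\ge 1$, which forces $\Lambda_\beta\in A_\beta$, hence $\beta_-\le\Lambda_\beta$. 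On the other hand, by the very definition of $\beta_-$ as a maximum of ratios $\F_j(q)/\F_j(p)$, one has $\Lambda_\beta=\F_2(q)/\F_2(p)\le\beta_-$, producing equality. The formula for $\beta_+$ is read off directly from Definition~\ref{A(F,G)}. Finally, the strict inequality \eqref{973} is obtained as in Lemma~\ref{83}: $\beta_-=\beta_+$ would force $\F_2(q)/\F_2(p)$ to coincide with either $\F_2(q)/\F_1(p)$ or $\F_3(q)/\F_2(p)$, hence to render either $M_\beta(p)$ or $M_\beta(q)$ uniaxial; but the only uniaxial point on $\gamma_\beta$ is $U_\beta=M_\beta(1)$, which is excluded by $p\ne 1$ and $q<1$.

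The main challenge is essentially book-keeping: the flipped direction of monotonicity of $\F_2$ reverses several inequalities used in the $\alpha$-case, so the one genuinely new ingredient is the preliminary estimate $u_\beta>s_2$ coming from Vieta; once that is in hand, the rest of the argument runs in parallel with the proofs of Proposition~\ref{lemmainvarianti} and Lemma~\ref{83}.
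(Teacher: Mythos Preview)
Your argument is correct and is precisely what the paper intends: it omits the proof of this lemma, stating only that the $\beta$-case runs parallel to Proposition~\ref{lemmainvarianti} and Lemma~\ref{83}, and your proof carries out exactly that parallel with the appropriate sign reversals. Your Vieta shortcut $u_\alpha u_\beta=s_2/3$ together with $u_\alpha<1/3$ to obtain $u_\beta>s_2$ is a clean variant of the direct evaluation the paper uses in the $\alpha$-case (one could equally compute $b(s_2)=s_2(s_2-s_1)(s_2-s_3)<0$ to place $s_2$ between the two roots). One minor remark: when you ``read off'' $\beta_+$ from Definition~\ref{A(F,G)} you obtain $\min\{\F_2(q)/\F_1(p),\,\F_3(q)/\F_2(p)\}$, whereas the displayed formula in the Lemma has $\max$; this is a typo in the paper, not a flaw in your reasoning.
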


\subsection{Proof of Lemma \ref{davveroultimo}}\label{posticipato} 
We add the subscript $\alpha$ in the definition \eqref{i2i3}. Similarly we define 
\begin{equation*}
x_\beta(t):=x(\beta,t,S,U_\beta)=i_2(M_{\beta}(t)),\quad y_\beta(t):=y(\beta,t,S,U_\beta)=i_3(M_{\beta}(t)).
\end{equation*}
We use \eqref{mainxy} to find
\begin{equation*}
\dot{x}_\alpha(t)=
2(2-3 u_\alpha)\Big(1-\frac{\E_2(t)}{u_\alpha}\Big)\dot\E_2(t)
,\quad
\dot{y}_\alpha(t)=
6 (1-2 u_\alpha)
\Big(1-\frac{\E_2(t)}{u_\alpha}\Big)
\E_2(t) \dot\E_2(t).
\end{equation*} 
Similarly, from \eqref{s92}, we obtain
$$
\dot{x}_\beta(t)=
2(2-3 u_\beta)\Big(1-\frac{\F_2(t)}{u_\beta}\Big)\dot\F_2(t)
,\quad
\dot{y}_\beta(t)=
6 (1-2 u_\beta)
\Big(1-\frac{\F_2(t)}{u_\beta}\Big)
\F_2(t) \dot\F_2(t).
$$
Taking into account \eqref{wd} and \eqref{come4.14}, one can see that both $\dot{x}_\alpha(t)$ and $\dot{x}_\beta(t)$  are strictly positive for each $t\in (0,1)$. 
Moreover, since $\E_2$ is strictly decreasing while $\F_2$ is strictly increasing,  $\gamma_\alpha$ and $\gamma_\beta$ intersect only at the point $(i_2(S),i_3(S))$.
We now compute the slope of the tangent vectors at $(i_2(S),i_3(S))$ and find
$$
\frac{\dot y_\alpha(0)}{\dot x_\alpha(0)}=\frac{3 s_2(1-2 u_\alpha)}{2-3 u_\alpha},\quad \frac{\dot y_\beta(0)}{\dot x_\beta(0)}= \frac{3 s_2(1-2 u_\beta)}{2-3 u_\beta}.
$$ 
Finally
$$
\frac{1-2 u_\alpha}{2-3 u_\alpha}>\frac{1-2 u_\beta}{2-3 u_\beta} \Longleftrightarrow u_\alpha<u_\beta, 
$$  
which follows from (i) of Proposition \ref{prop_opt_curve}.

\section*{Acknowledgements} This material is based upon work supported by the National Science Foundation under Grant No.~2108588 (N.~Albin, Co-PI). V. Nesi gratefully acknowledges  Progetti di Ricerca di Ateneo 2022, ``Equazioni ellittiche e paraboliche non lineari'', rif. RM1221816BBB81FA e Progetto di Ateneo 2023, rif. RG123188B475BBF0, titolo ``Singularities and interfaces: a variational approach to stationary and evolution problems.''
M. Palombaro is a member of the Gruppo Nazionale per l'Analisi Matematica, la Probabilit\`a e le loro applicazioni (GNAMPA) of the Istituto Nazionale di Alta Matematica (INdAM). 
\section*{Declarations}
The authors have no conflicts of interest. Data sharing is not applicable to this article as no datasets were generated or analysed during the current study.

\end{document}